\documentclass[12 pt]{article}
\usepackage[left=3.2cm,right=3cm,
    top=3cm,bottom=3.5cm,bindingoffset=0cm]{geometry}
\usepackage{amssymb,amsmath,amsfonts,amsthm}
\usepackage{graphicx}
\usepackage{subcaption}
\usepackage{float}
\overfullrule=5 pt
\parskip=5 pt

 2

\newcommand{\nats}{\mathbb N}
\newcommand{\A}{\mathbb A}
\newcommand{\fr}{\mbox{fr}}
\newcommand{\Fac}{\mbox{Fac}}
\newtheorem{theorem}{Theorem}[section]
\newtheorem{corollary}[theorem]{Corollary}
\newtheorem{lemma}[theorem]{Lemma}

\newtheorem{thm}{Theorem}

\theoremstyle{definition}
\newtheorem{definition}[theorem]{Definition}
\newtheorem{example}[theorem]{Example}
\captionsetup[figure]{labelfont=it,textfont={it}}
\captionsetup[table]{labelfont=it,textfont={it}}
\newenvironment{dedication}
  {\vspace*{1cm} 
   \itshape  
   \raggedleft   
  }
  {\par 
  }

\begin{document}
\title{\bf Open and closed factors in Arnoux-Rauzy words \thanks{This work was performed within the framework of the LABEX MILYON (ANR-10-LABX-0070) of Universit\'{e} de Lyon, within the program ``Investissements d'Avenir" (ANR-11-IDEX-0007) operated by the French National Research Agency (ANR), and has been supported by RFBS grant 18-31-00009.}}
\author{Olga Parshina\textsuperscript{1,2} and Luca Zamboni\textsuperscript{1}}
\date{}
\maketitle

\begin{center}
\textsuperscript{1}Universit\'{e} de Lyon, Universit\'{e} Lyon  1,  CNRS  UMR  5208,  Institut  Camille  Jordan,  43 boulevard du 11 novembre 1918, F69622 Villeurbanne Cedex, France\\
\textsuperscript{2}Sobolev Institute of Mathematics of the Siberian Branch of the Russian Academy of Sciences, 4 Acad. Koptyug avenue, 630090 Novosibirsk, Russia\\
\texttt{\{parshina,zamboni\}@math.univ-lyon1.fr}

\begin{dedication}
\textbf{In Memory of the Late Professor Aldo de Luca }
\end{dedication}
\end{center}

\vspace{1cm}

{\bf Abstract:} {Given a finite non-empty set $\mathbb A,$ let $\mathbb A^+$ denote the free semigroup generated by $\mathbb A$ consisting of all finite words $u_1u_2\cdots u_n$ with $u_i\in \mathbb A.$ A word $u\in \mathbb A^+$ is said to be closed if either $u\in \mathbb A$ or if $u$ is a complete first return to some factor $v\in \mathbb A^+,$ meaning $u$ contains precisely two occurrences of $v,$  one as a prefix and one as a suffix. We study the function $f_x^c:\mathbb N \rightarrow \mathbb N$ which counts the number of closed factors of each length in an infinite word $x.$ We derive an explicit formula for $f_x^c$ in case $x$ is an Arnoux-Rauzy word. As a consequence we prove that $\liminf _{n\rightarrow \infty}f_x^c(n)=+\infty.$}\\
{\bf Keywords:} {Arnoux-Rauzy word, first return, complexity, return word, Sturmian word, closed word}

\section{Introduction}
Throughout this paper we let $\nats=\{1,2,3,\dots \}$ and $\omega=\{0,1,2,3,\dots \}$ be the smallest transfinite ordinal.  Given a finite non-empty set $\A$, we let $\A^+$ denote the free semigroup generated by $\A$ consisting of all words $u_1u_2\cdots u_n$ with $u_i\in \A,$ and  $\A^\nats$ denote the set of (right) infinite words $x=x_1x_2x_3\cdots$ with $x_i\in \A.$  For each infinite word $x=x_1x_2x_3\cdots \in\A^\nats$, the factor complexity $p_x(n)$ counts the number of distinct blocks (or factors) $x_ix_{i+1}\cdots x_{i+n-1}$ of length $n$ occurring in $x.$  First introduced by Hedlund and Morse in their seminal 1938 paper
\cite{MoHe1} under the name of {\it block growth}, the factor complexity provides a useful measure of the extent of randomness of $x.$ Periodic words have bounded factor complexity while digit expansions of normal numbers
have maximal complexity. A celebrated theorem of Morse and Hedlund in \cite{MoHe1} states that every aperiodic (meaning not ultimately periodic) word contains at least $n+1$ distinct factors of each length $n$. Sturmian words are those aperiodic words of minimal factor complexity:  $p_x(n)=n+1$ for each $n\geq 1.$  

Several notions of complexity have been successfully used in the study of infinite words and their combinatorial properties. They include  Abelian complexity \cite{ABCD, CFSZ, CPZ, CovHed, Priv, RSZ},  palindrome complexity \cite{ABCD}, cyclic complexity \cite{CFSZ}, privileged complexity \cite{Priv},  group complexity \cite{CPZ} and maximal pattern complexity  \cite{KaZa} to name just a few.  
In this paper we introduce and study two new complexity functions  based on the notions of open and closed words \cite{Fici}. A word $u\in \A^+$ is said to be {\it closed} if either $u\in \A$ or if $u$ is a complete first return to some proper factor $v\in \A^+,$ meaning $u$ has precisely two occurrences of $v,$ one as a prefix and one as a suffix. If $u$ is not closed then $u$ is said to be {\it open}. Thus a word $u\in \A^+\setminus \A$  is closed if and only if it is bordered and its longest border only occurs in $u$ as a prefix and as a suffix. The longest border of a closed word is called {\it frontier}.  For example, $aabaaabaa$ is closed and its frontier is equal to $aabaa.$ In contrast, $ab$ is open as it is unbordered while $abaabbababbaaba$ is open since its frontier $aba$ occurs internally in $u.$ It is easily seen that all  privileged words \cite{Priv} are closed and hence so are all palindromic factors of rich words \cite{GJWZ}. The terminology  open and closed was first introduced by the authors in \cite{BDF} although the notion of a closed word had already been introduced earlier by A. Carpi and A. de Luca in \cite{CdL}. For a nice overview of open and closed words we refer the reader to the recent survey article by G. Fici \cite{Fici}.

For each infinite word $x\in \A^\nats$ we consider the functions $f_x^c, f_x^o:\nats \rightarrow \nats$ which count the number of closed and open factors of $x$ of each length $n\in \nats.$ In this paper we investigate the function $f_x^c$ where $x$ is an Arnoux-Rauzy word. Arnoux-Rauzy words were first introduced in \cite{AR} in the special case of a $3$-letter alphabet. They are a natural generalization of Sturmian words to alphabets of cardinality greater that two.
If $x\in \A^\nats$ is an Arnoux-Rauzy word, then $p_x(n)=(|\A|-1)n+1$ for each $n\in \nats.$ Moreover each factor $u$ of $x$ has precisely $|\A|$ distinct complete first returns in $x.$

Our main result in Theorem~\ref{main} below provides an explicit formula for the closed complexity function $f_x^c(n)$ for an Arnoux-Rauzy word $x$ on a $t$-letter alphabet $\A.$    Since for any word $x\in \A^\nats$ we have that $f_x^c(n)+f_x^o(n)=p_x(n),$ a formula for $f_x^c(n)$ also yields a formula for $f_x^o(n).$
Our formula is expressed in terms of two related sequences associated to $x.$ The first is the sequence $(b_k)_{k\geq 0}$  of the lengths of the bispecial factors 
$\varepsilon=B_0,B_1,B_2,\ldots $ of $x,$  ordered according to increasing length.  The second is 
the sequence $(p_a^{(k)})_{a\in \A}^{k\in \omega}$ where for each $k\in \omega,$ the $t$ coordinates of   $(p_a^{(k)})_{a\in \A}$ are the lengths of the $t$ first returns to $B_k$ in $x.$ Both sequences have been extensively studied in the literature. 
For each $k\in \omega,$ the coordinates of  $(p_a^{(k)})_{a\in \A}$ are coprime and each is a period of the word $B_k.$  Moreover, each $B_k$ is an extremal Fine and Wilf word, i.e.,  any word $u$ having  periods  $(p_a^{(k)})_{a\in \A}$ and of length greater than $b_k$ is a constant word, i.e., $u=a^n$ for some $n$ (see  \cite{TZ}).

\begin{thm}\label{main}
Let $x\in \A^\nats$ be an Arnoux-Rauzy word. For each $k\in \omega$  and $a\in \A$ set
$I_{k,a}=[b_k-2p_k+p_a^{(k)}+2, b_k+p_a^{(k)}]$ where  $p_k=\min_{b\in \A}\{p_b^{(k)}\}.$
Let \begin{equation}\label{formula}F(a,n) = \sum_{\substack{k\in \omega\\n\in I_{k,a}}} (d(n,I_{k,a})+1),\end{equation} where for $n \in I_{k,a},$ the quantity $d(n,I_{k,a})$ denotes the minimal distance from $n$ to the endpoints of the interval $I_{k,a}.$ Then the number of closed factors of $x$ for each length $n$ is $f^c_x(n)=\sum_{a\in \A} F(a,n)$.
\end{thm}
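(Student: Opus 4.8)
The plan is to count closed factors of $x$ length by length, exploiting the key structural fact that a word is closed precisely when it is a complete first return to its longest border. For an Arnoux-Rauzy word, the natural candidates for borders that ``matter'' are the bispecial factors $B_k$, since only bispecial words can be extended in more than one way and hence only their first returns produce closed words of varying length in a controlled way. So first I would establish the following reduction: every closed factor $u$ of $x$ with $|u|\geq 2$ is a complete first return to some $B_k$, and conversely, I need to understand exactly which complete first returns to $B_k$ arise as factors of $x$ and which of those have $B_k$ as their \emph{longest} border. Concretely, I would fix $k$ and the letter $a\in\A$ determining the first return word of length $p_a^{(k)}$, and analyze the words $w$ that are factors of $x$, have $B_k$ as a border, and have $B_k$ occurring exactly twice (once as prefix, once as suffix).

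Second, the core computation: fix $k$ and $a$, and determine for which lengths $n$ there is a factor $u$ of $x$ with $|u|=n$ that is a complete first return to $B_k$ via the return word indexed by $a$, together with the multiplicity with which such $u$ occurs among closed factors of that length. The shortest such complete first return has length $b_k+p_a^{(k)}$ (prefix $B_k$ of length $b_k$ followed by one occurrence of the return word of length $p_a^{(k)}$), but a complete first return to $B_k$ sitting inside $x$ can be a \emph{longer} extension obtained by prepending/appending letters while keeping $B_k$ as a border occurring only at the ends — and crucially, we must subtract off those whose longest border strictly exceeds $B_k$, i.e., those that are in fact complete first returns to $B_{k+1}$ or that have a longer border occurring internally. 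The interval $I_{k,a}=[b_k-2p_k+p_a^{(k)}+2,\ b_k+p_a^{(k)}]$ and the triangular weight $d(n,I_{k,a})+1$ must emerge from this bookkeeping; here $p_k=\min_b p_b^{(k)}$ governs how far one can extend before the longest border jumps, using the Fine–Wilf extremality of $B_k$ (any word with all the periods $p_b^{(k)}$ and length $>b_k$ is constant, so extensions beyond a certain point forced a longer structured border). I would prove the interval and weight formula by a careful induction on $k$, matching the number of newly created closed factors of length $n$ at ``level $k$'' against $d(n,I_{k,a})+1$, and then sum over $a$ and over all $k$ with $n\in I_{k,a}$.

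Third, I would verify consistency: summing $f_x^c(n)=\sum_{a\in\A}F(a,n)$ against the known complexity $p_x(n)=(t-1)n+1$ gives a formula for $f_x^o(n)$, and I would sanity-check small cases (Sturmian $t=2$, where the intervals and the recursion for $(b_k)$ and $(p_a^{(k)})$ are completely explicit via continued fractions) to make sure the triangular weights telescope correctly. For the $\liminf$ corollary I would then observe that as $n\to\infty$, for each $n$ there is at least one level $k$ and letter $a$ with $n$ near the center of $I_{k,a}$, and the width of these intervals grows, so $d(n,I_{k,a})+1$ is unbounded along a suitable choice — more carefully, one shows that the number of pairs $(k,a)$ with $n\in I_{k,a}$ together with their weights cannot stay bounded, using that $b_k\to\infty$ and the $p_a^{(k)}$ spread out.

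The main obstacle I expect is the precise identification of the interval $I_{k,a}$ and the triangular weight $d(n,I_{k,a})+1$: one must pin down, for each admissible length $n$, exactly how many factors of $x$ of length $n$ are closed with frontier $B_k$ and return word indexed by $a$, and this requires carefully controlling the bilateral extensions of a first return to $B_k$ inside $x$ — in particular showing that extending on the left by $i$ letters and on the right by $j$ letters keeps $B_k$ as the unique border at the ends for exactly the pairs $(i,j)$ counted by the weight, and that the constraint $i+j\leq 2p_k-2$ (which produces the left endpoint of $I_{k,a}$) is exactly where the Fine–Wilf extremality forces the longest border to grow past $B_k$. Getting this combinatorial lemma right — with all the $+2$'s and $-2$'s — and showing that distinct $(k,a)$ contributions do not overcount the same factor, is the technical heart of the argument.
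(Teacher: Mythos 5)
There is a genuine gap, and it starts at your very first reduction: it is not true that every closed factor of length at least $2$ is a complete first return to some bispecial factor $B_k$. In the Fibonacci word the factor $baab$ is closed with frontier $b$, and $b$ is not bispecial (the unique bispecial factor of length $1$ is $a$); $baab$ is a complete first return to $b$, not to any $B_k$. The frontiers of closed factors range over \emph{all} factors of $x$, and the paper's actual organizing principle is different: it classifies each factor $v$ by the least $k$ with $v$ a factor of $B_k$ (the map $\varphi$), shows via palindromic richness that such $v$ must contain the longest palindromic suffix $S_k$ of $B_{k-1}a_k$ and is therefore \emph{uni-occurrent} in $B_k$, and then shows that a closed factor $u$ with frontier $v$ extends \emph{outward} (by the uniquely determined $u_1,u_2$ with $B_k=u_1vu_2$) to the complete first return $R^{(k)}_a=u_1uu_2$. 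This gives a bijection between closed factors of type $(k,a)$ and the set $\varphi^{-1}(k)$, with the length shift $|u|-|v|=p^{(k)}_a$, and the triangular weight $d(n,I_{k,a})+1$ is then just the count of words in $\varphi^{-1}(k)$ of each length, which falls out of $B_k=x_kS_k\overline{x_k}$ and uni-occurrence. None of this machinery (richness, uni-occurrence of $S_k$, the bijection via the frontier map) appears in your proposal, and the Fine--Wilf extremality you invoke instead is not what makes the count work.

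A second, related error is that your bookkeeping runs in the wrong direction: you describe $b_k+p^{(k)}_a$ as the length of the \emph{shortest} relevant word and propose to count longer extensions of first returns to $B_k$, subtracting those whose border grows. In fact $b_k+p^{(k)}_a=|R^{(k)}_a|$ is the \emph{right} endpoint of $I_{k,a}$, i.e.\ the maximal length: the closed factors of type $(k,a)$ are obtained from $R^{(k)}_a$ by trimming $u_1$ from the front and $u_2$ from the back, so they are central factors of $R^{(k)}_a$, shorter than or equal to it. Your proposed inclusion--exclusion over growing borders, combined with the false premise that frontiers are bispecial, would not produce the interval $I_{k,a}$ or the weights, and the ``careful induction on $k$'' that is supposed to deliver the triangular weight is asserted rather than carried out. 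You have correctly located the technical heart of the problem (pinning down the intervals and multiplicities, and showing the contributions of distinct $(k,a)$ do not overlap), but the proposal does not contain the idea that resolves it.
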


For each fixed $n\in \nats$ and $a\in \A,$ the sum in (\ref{formula}) is finite since it only involves those $k$ for which $n\in I_{k,a}.$

As a corollary of Theorem~\ref{main}, we show that if $x$ is an Arnoux-Rauzy word, then $\liminf f_x^c(n)=+\infty.$ 
In contrast, it follows from \cite{SS} that if $x$ is the regular paperfolding word, then 
$\liminf f_x^c(n)=0,$ in other words, for infinitely many $n,$ all factors of $x$ of length $n$ are open.

We end this section by recalling a few basic notions in combinatorics on words relevant to the paper. Throughout this text $\A$ will denote a finite non-empty set (the alphabet). For $n\in \nats,$  let $\A^n$ denote the set of all words $a_1a_2\cdots a_n$ with $a_i\in \A.$ For $u=a_1a_2\cdots a_n\in \A^n,$ we let $\overline u\in \A^n$ denote the reversal of $u,$ i.e., $\overline u=a_na_{n-1}\cdots a_1.$ Let $\A^+=\bigcup_{n\in \nats}\A^n$ denote the free semigroup generated by $\A.$ 
For $u=a_1a_2\cdots a_n\in \A^+$ the quantity $n$ is called the length of $u$ and denoted $|u|.$ We set $\A^*=\A^+\cup \{\varepsilon\}$ where $\varepsilon$ is the empty word (of length equal to $0).$ We let $\A^\nats$ denote the set of all infinite words $a_1a_2a_3\cdots$ with $a_i\in \A.$ For $x\in \A^+\cup \A^\nats$ and $v\in \A^*$ we say that $v$ is a factor of $x$ if $x=uvy$ for some $u\in \A^*$ and $y\in \A^*\cup \A^\nats.$ We let $\Fac(x)$ denote the set of all factors of $x.$ A factor $v$ of $x$ is called right (resp. left) special if $va$ and $vb$ (resp. $av$ and $bv)$ are each factors of $x$ for some choice of distinct $a,b\in \A.$ A factor which is both right and left special is said to be bispecial. 
Given factors $u$ and $v$ of $x,$ we say that $u$ is a first return to $v$ in $x$ if $uv$ is a factor of $x$ having precisely two first occurrences of $v,$ one as a prefix and one as a suffix. In this case the word $uv$ is called a complete first return to $v.$ 
 
\section{Counting closed factors in Arnoux-Rauzy words}

Throughout this section we let $\A$ denote a finite set of cardinality $t\geq 2.$
A recurrent word $x\in \A^\nats$ is called an Arnoux-Rauzy word if $x$ contains, for each $n\geq 0,$ precisely one right special factor $R_n$ of length $n$ and one left special factor $L_n$ of length $n.$ Furthermore, $R_n$ is a prefix of $t$-many distinct factors of $x$ of length $n+1$ while $L_n$ is a suffix of $t$-many distinct factors of $x$ of length $n+1.$ In particular one has $p_x(n)=(t-1)n+1$ and each factor $u$ of $x$ has precisely $t$ distinct complete first returns. In the special case of a binary alphabet, we see that $x$ is a Sturmian word. Arnoux-Rauzy words constitute a special class of episturmian words (see \cite{Ber, DJP, JP}) and hence each factor $u$ of an Arnoux-Rauzy word is (palindromically) rich, i.e., $u$ contains exactly $|u|+1$ many distinct palindromic factors (including the empty word $\varepsilon).$ We will make use of the following alternative characterisation of rich words given in \cite{DJP}.

\begin{lemma}\label{rich}[Proposition 3 in \cite{DJP}] A word $u\in \A^+$ is rich if and only if for every prefix $v$ of $u,$ the longest palindromic suffix of $v$ is uni-occurrent in $v.$  
\end{lemma}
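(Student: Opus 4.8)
\noindent\emph{Proof plan.}\ The plan is to derive the lemma from the standard prefix-by-prefix enumeration of palindromic factors (this is essentially the argument behind Proposition~3 of \cite{DJP}). For a finite word $w$ write $P(w)$ for the set of distinct palindromic factors of $w$, including $\varepsilon$; by definition $u$ is rich exactly when $|P(u)|=|u|+1$. Let $v_0=\varepsilon,v_1,\dots ,v_n=u$ be the prefixes of $u$ with $|v_i|=i$. First I would prove the telescoping identity
\[
 |P(u)|=1+\#\{\, i\in\{1,\dots ,n\}:\ \text{the longest palindromic suffix of }v_i\text{ is uni-occurrent in }v_i\,\}.
\]
Granting this, both directions follow at once: each increment $|P(v_i)|-|P(v_{i-1})|$ is $0$ or $1$, so $|P(u)|\le |u|+1$ always, and $|P(u)|=|u|+1$ iff every increment equals $1$, i.e. iff the longest palindromic suffix of every nonempty prefix of $u$ is uni-occurrent in it (the prefix $\varepsilon$ contributes only the $\varepsilon$ already counted in the leading $1$).

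The heart of the proof is the single-letter increment. Clearly $P(v_{i-1})\subseteq P(v_i)$ since $v_{i-1}$ is a prefix of $v_i$. I would then show that $P(v_i)\setminus P(v_{i-1})$ has at most one element, namely the longest palindromic suffix $s$ of $v_i$, and that $s$ is new precisely when it is uni-occurrent in $v_i$. Indeed: a factor of $v_i$ lies in $\Fac(v_{i-1})$ iff it has an occurrence in $v_i$ ending before position $i$, so any $q\in P(v_i)\setminus P(v_{i-1})$ must have all its occurrences ending at position $i$ and is therefore the length-$|q|$ suffix of $v_i$; moreover, if such a palindromic suffix $q$ had $|q|<|s|$, then since $s$ is a palindrome the suffix $q=\overline q$ of $s$ is also a prefix of $s$, which (as $s$ is a suffix of $v_i$) produces an occurrence of $q$ in $v_i$ ending at position $i-|s|+|q|<i$, contradicting $q\notin\Fac(v_{i-1})$. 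Hence the only possible new palindrome is $s$, and since $s$ always occurs as a suffix of $v_i$, it is new iff $s\notin\Fac(v_{i-1})$ iff $s$ is uni-occurrent in $v_i$. Summing over $i=1,\dots ,n$ yields the displayed identity.

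The step I expect to require the most care — though it stays elementary — is exactly this triple translation for the longest palindromic suffix $s$ of $v_i$ between ``$s$ contributes a new palindrome'', ``$s\notin\Fac(v_{i-1})$'', and ``$s$ is uni-occurrent in $v_i$''; it rests on the two bookkeeping facts that a factor of $v_i$ belongs to $\Fac(v_{i-1})$ iff some occurrence of it ends strictly before position $i$, and that a factor has at most one occurrence ending at a prescribed position. The only genuinely combinatorial ingredient is the observation that a palindrome which is a border of a palindrome $s$ occurs in $s$ both as a prefix and as a suffix, and it is this that localizes all the novelty to the longest palindromic suffix. I would finish by checking the small end cases ($i=1$, and whether $v=\varepsilon$ should be admitted in the quantifier), which cause no trouble.
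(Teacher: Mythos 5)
Your proof is correct. Note that the paper itself gives no proof of this lemma --- it is imported verbatim as Proposition~3 of \cite{DJP} --- and your argument (telescoping $|P(v_i)|-|P(v_{i-1})|\in\{0,1\}$ over prefixes, with the new palindrome, if any, forced to be the longest palindromic suffix because any shorter palindromic suffix is a border of it and hence already occurs earlier) is precisely the standard Droubay--Justin--Pirillo argument behind that proposition. All the steps, including the reduction of ``uni-occurrent in $v_i$'' to ``not a factor of $v_{i-1}$'' via uniqueness of the occurrence ending at position $i$, are sound.
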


Let us now fix an Arnoux-Rauzy word $x\in \A^\nats.$ Recall that for each length $n\in \omega$ an Arnoux-Rauzy word contains either zero or one bispecial factor of length $n.$ Let $\varepsilon =B_0, B_1, B_2, \ldots$ be the sequence of bispecial factors of $x$ ordered according to increasing length. Put $b_k=|B_k|$ so that $0=b_0<b_1<b_2<\cdots.$ We recall the following characterization of the bispecial factors $B_k$ of $x$ in terms of palindromic closures (see \cite{deL, DJP}). For each $k\in \nats$ there exists a unique $a_k\in \A$ such that $B_{k-1}a_k$ is a left special factor of $x.$ The sequence $(a_k)_{k\in \nats}$ is called the directive sequence of $x.$ It follows that $B_{k-1}a_k$ is a prefix of $B_k$ but in fact $B_k$ is the palindromic closure of $B_{k-1}a_k,$ i.e., the shortest palindrome beginning in $B_{k-1}a_k.$ More precisely, if we let $S_k$ denote the longest palindromic suffix of $B_{k-1}a_k$ and write  $B_{k-1}a_k=x_kS_k$ with $x_k\in \A^*,$ then $B_k=x_kS_k\overline {x_k}$ (see for instance Lemma 5 in \cite{deL} in case the alphabet $\A$ is binary).

\begin{lemma}\label{uni} For each $k\in \nats$ we have that $S_k$ is a uni-occurrent factor of $B_k.$ In particular $S_k$ is not a factor of $B_{k-1}.$ 
\end{lemma}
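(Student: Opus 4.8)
The plan is to deduce both assertions from the richness characterization in Lemma~\ref{rich} applied to a suitable prefix of $x$. First I would recall the setup: $B_{k-1}a_k$ is a left special factor of $x$, $S_k$ is its longest palindromic suffix, and $B_k$ is the palindromic closure of $B_{k-1}a_k$. The key observation is that, since $x$ is an Arnoux-Rauzy word, $B_{k-1}a_k$ (being left special) is a factor of $x$, hence so is $B_k$; moreover every factor of $x$ is rich. So I would fix an occurrence of $B_k$ in $x$ and look at the prefix $v$ of $x$ ending at that occurrence. Richness of $v$ together with Lemma~\ref{rich} says that the longest palindromic suffix of $v$ is uni-occurrent in $v$.

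The main step is to identify the longest palindromic suffix of $B_k$. Here I would use the palindromic closure structure: writing $B_{k-1}a_k = x_k S_k$ with $S_k$ the longest palindromic suffix and $B_k = x_k S_k \overline{x_k}$, I claim $B_k$ itself is the longest palindromic suffix of $B_{k-1}a_k B_k$ — equivalently, I want $B_k$ to be the longest palindromic suffix of the prefix $v$ of $x$ ending where $B_k$ ends. If $B_k$ is recognized as this longest palindromic suffix, then applying Lemma~\ref{rich} to $v$ gives that $B_k$ is uni-occurrent in $v$; but $S_k$ is a suffix of $B_k$ that is also a prefix of $B_k$ (since $B_k$ is a palindrome with palindromic suffix $S_k = \overline{S_k}$, and $S_k$ being a suffix of the palindrome $B_k$ forces $S_k$ to be a prefix of $B_k$ as well). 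Then any occurrence of $S_k$ strictly inside $B_k$, or any occurrence in $B_{k-1}$, would combine with the prefix/suffix occurrences to produce a second occurrence of $B_k$ inside $v$ — more carefully, I would argue directly that a second occurrence of $S_k$ in $B_k$ yields, by the Fine--Wilf / palindromic-closure machinery, a shorter palindrome than $B_k$ appearing as a suffix of $v$ and hence contradicts either minimality of $B_k$ as palindromic closure or uni-occurrence. The cleanest route is: apply Lemma~\ref{rich} directly to a prefix of $x$ whose longest palindromic suffix is exactly $S_k$ — namely the prefix ending at the first occurrence of $S_k$ (which, as $S_k$ is the longest palindromic suffix of $B_{k-1}a_k$, ends inside the occurrence of $B_{k-1}a_k$), obtaining uni-occurrence of $S_k$ in that prefix, and then note that since $S_k$ is not a suffix of $B_{k-1}$ (it has length $>|B_{k-1}|$ is false in general, so this needs the uni-occurrence argument rather than a length count) its first occurrence is the one straddling the last letter $a_k$; uni-occurrence in the longer prefix ending at $B_k$ then gives the result.

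I expect the main obstacle to be pinning down exactly which prefix of $x$ to feed into Lemma~\ref{rich} and verifying that $S_k$ (respectively $B_k$) really is the \emph{longest} palindromic suffix of that prefix, since a priori a longer palindrome could appear. This is where the defining property of $B_k$ as the palindromic closure of $B_{k-1}a_k$ — equivalently, that $B_k$ is the shortest palindrome having $B_{k-1}a_k$ as prefix, and that no bispecial (hence no long enough palindromic) factor occurs strictly between lengths $b_{k-1}$ and $b_k$ — must be used to exclude intermediate palindromic suffixes. Once the correct prefix is chosen, uni-occurrence of $S_k$ is immediate from Lemma~\ref{rich}, and ``$S_k$ is not a factor of $B_{k-1}$'' follows because an occurrence of $S_k$ inside $B_{k-1}$ would be a second occurrence of $S_k$ in that prefix (the first being the one that ends at the letter $a_k$), contradicting uni-occurrence.
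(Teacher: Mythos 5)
There is a genuine gap, and it sits exactly at the step you wave through at the end. The paper's proof is much shorter than any of your routes: it applies Lemma~\ref{rich} to the word $B_{k-1}a_k$ \emph{itself} (a factor of $x$, hence rich; take $v$ in Lemma~\ref{rich} to be the whole word), which immediately gives that $S_k$, being by definition the longest palindromic suffix of $B_{k-1}a_k$, is uni-occurrent in $B_{k-1}a_k=x_kS_k$. Then comes the one idea your plan is missing: since $B_k=x_kS_k\overline{x_k}$ and $S_k$ are both palindromes, reversal of $B_k$ maps occurrences of $S_k$ in $B_k$ to occurrences of $S_k$ in $B_k$, so any occurrence of $S_k$ other than the central one reflects to an occurrence lying inside the left half $x_kS_k=B_{k-1}a_k$, i.e., $S_k$ would occur at least twice in $B_{k-1}a_k$ --- contradiction. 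Without this reflection step you have no control over occurrences of $S_k$ that end inside the tail $\overline{x_k}$ of $B_k$.

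Concretely, your ``cleanest route'' establishes (after some work you only sketch) that $S_k$ is uni-occurrent in the prefix of $x$ ending at the first occurrence of $S_k$, and then asserts ``uni-occurrence in the longer prefix ending at $B_k$ then gives the result.'' That is a non sequitur: uni-occurrence is not preserved under extending the word, and Lemma~\ref{rich} applied to a longer prefix only speaks about \emph{that prefix's} longest palindromic suffix, which need not be $S_k$ (nor $B_k$ --- your first route founders on exactly this, as you acknowledge). So occurrences of $S_k$ in the right half of $B_k$ are simply not excluded by anything you prove. The correct ingredients (richness, $S_k$ as longest palindromic suffix of $B_{k-1}a_k$, $S_k$ a prefix and suffix of the palindrome $B_k$) are all present in your write-up, but the argument that actually closes the case --- the mirror symmetry of occurrences of a palindromic factor inside a palindrome --- is never made, and the Fine--Wilf/palindromic-closure machinery you gesture at in its place is not needed and not developed. (Your derivation of the ``in particular'' clause from uni-occurrence is fine once uni-occurrence in $B_k$ is actually established.)
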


\begin{proof} Clearly $S_k$ is a factor of $B_k.$ To see that it is uni-occurrent, suppose that $S_k$ occurs more than once in $B_k.$ Since $S_k$ and $B_k$ are each palindromes and $B_k=x_kS_k\overline{x_k},$ it follows that $S_k$ occurs at least twice in $x_kS_k=B_{k-1}a_k.$ But this contradicts Lemma~\ref{rich} since $S_k$ was defined as the longest palindromic suffix of $B_{k-1}a_k.$ \end{proof}

Define $\varphi :\Fac(x)\rightarrow \omega$ by $\varphi(v)$ is the least $k\in \omega$ such that $v$ is a factor of $B_k.$ In particular $\varphi(v)=0\Leftrightarrow v=\varepsilon.$

\begin{lemma}\label{char} Let $k\in \nats$ and $v\in \A^+.$ Then $v\in \varphi^{-1}(k)$ if and only if $v$ is a factor of $B_k$ containing $S_k$ as a factor. In particular each $v\in \varphi^{-1}(k)$ is uni-occurrent in $B_k.$  
\end{lemma}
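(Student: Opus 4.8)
The plan is to prove the two implications separately, leaning on Lemma~\ref{uni} and the palindromic-closure description $B_k = x_k S_k \overline{x_k}$ of the bispecial factors.

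First, suppose $v \in \varphi^{-1}(k)$, i.e.\ $v$ is a factor of $B_k$ but not of $B_{k-1}$. I want to show $v$ contains $S_k$. The key observation is that $B_{k-1}$ occurs in $B_k$ both as a prefix and as a suffix: indeed $B_{k-1}a_k$ is a prefix of $B_k$, and since $B_k$ and $B_{k-1}$ are palindromes, taking reversals shows $\overline{a_k}\,B_{k-1}=a_k B_{k-1}$ is a suffix of $B_k$. Moreover these are the \emph{only} two occurrences of $B_{k-1}$ in $B_k$: $B_{k-1}$ is bispecial and in an Arnoux--Rauzy word there is no bispecial factor strictly between lengths $b_{k-1}$ and $b_k$, so any complete first return to $B_{k-1}$ has length $\ge b_k$; hence $B_k$, having length $b_k$, is itself the (unique) complete first return to $B_{k-1}$, giving exactly two occurrences of $B_{k-1}$ in $B_k$, the prefix one and the suffix one. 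Now if $v$ is a factor of $B_k$ that avoids $S_k$, then $v$ lies entirely inside $B_k$ minus the central letter-region carved out by $S_k$; writing $B_k = x_k S_k \overline{x_k}$ with $|x_k| < b_{k-1}$ (because $x_k S_k = B_{k-1}a_k$ and $|S_k|\ge 1$, in fact $|x_k S_k| = b_{k-1}+1$ so $|x_k| = b_{k-1}+1-|S_k| \le b_{k-1}$), one checks that any occurrence of $v$ in $B_k$ that misses the occurrence of $S_k$ inside $B_{k-1}a_k$ must sit inside the suffix copy $a_k B_{k-1}$ of $B_k$, hence inside $B_{k-1}$, contradicting $\varphi(v)=k$. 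The mild care here is to handle both ``sides'' of the occurrence of $S_k$ symmetrically using the palindromicity of $B_k$; this bookkeeping is the main obstacle.

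Conversely, suppose $v$ is a factor of $B_k$ containing $S_k$. Then $\varphi(v)\le k$. If $\varphi(v) < k$, then $v$ is a factor of $B_{k-1}$, hence $S_k$ is a factor of $B_{k-1}$, contradicting Lemma~\ref{uni}. Therefore $\varphi(v)=k$, which proves the reverse implication.

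Finally, for the ``in particular'' clause: if $v\in\varphi^{-1}(k)$, then by the forward implication $v$ contains $S_k$ as a factor, and by Lemma~\ref{uni} $S_k$ is uni-occurrent in $B_k$; since every occurrence of $v$ in $B_k$ forces an occurrence of $S_k$ in $B_k$, the word $v$ can occur at most once in $B_k$, i.e.\ $v$ is uni-occurrent in $B_k$. This is immediate once the two equivalences are in hand, so the only real work is the first implication above.
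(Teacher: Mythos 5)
Your proof is correct and follows essentially the same route as the paper: the converse direction is handled via Lemma~\ref{uni}, and the forward direction rests on the decomposition $B_k=B_{k-1}a_k\overline{x_k}=x_ka_kB_{k-1}$, so that any occurrence of a factor avoiding the central occurrence of $S_k$ lies entirely inside the prefix or the suffix copy of $B_{k-1}$, contradicting $\varphi(v)=k$. The only deviations are cosmetic: your digression showing $B_{k-1}$ occurs exactly twice in $B_k$ (via lengths of complete first returns) is true but not needed, and an occurrence missing $S_k$ may land in the prefix copy of $B_{k-1}$ rather than the suffix copy --- a point your remark about handling both sides symmetrically already covers.
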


\begin{proof}Suppose $v$ is a factor of $B_k$ containing $S_k$ as a factor. Then by Lemma~\ref{uni}, $v$ is not a factor of $B_{k-1}$ and hence not a factor of any $B_j$ with $j<k.$ Hence $\varphi(v)=k.$ Conversely suppose that $\varphi(v)=k.$ Then $v$ is a factor of $B_k$ but not of $B_{k-1}.$ Since \[B_k=x_kS_k\overline{x_k}=B_{k-1}a_k\overline{x_k}=x_ka_kB_{k-1},\] it follows that $v$ must contain $S_k$ as a factor. Having established that each $v\in \varphi^{-1}(k)$ contains $S_k,$ it follows by Lemma~\ref{uni} that $v$ is uni-occurrent in $B_k.$
\end{proof}

For each $k\in \omega$ and $a\in \A,$ let $R^{(k)}_a$ denote the complete first return to $B_k$ in $x$ beginning in $B_ka$ and put $p_a^{(k)}=|R^{(k)}_a|-b_k.$ In other words $p_a^{(k)}$ is the length of the first return to $B_k$ determined by $R_a^{(k)}.$ We note that $R^{(0)}_a=a$ for each $a\in \A.$ The sequence $(p_a^{(k)})_{a\in \A}^{k\in \omega}$ is computed recursively as follows : $p_a^{(0)}=1$ for each $a \in \A.$ For $k\geq 1,$ we have $p_{a_k}^{(k)}=p_{a_k}^{(k-1)}$, and $p_b^{(k)}=p_b^{(k-1)}+p_{a_k}^{(k-1)}$ for $b\in \A\setminus \{a_k\}.$ It is easily verified by induction that  \[b_k=~\frac{\sum_{a\in \A}p_a^{(k)}-t}{t-1}.\]
For each $k\in \nats,$ we set $p_k=p_{a_k}^{(k)}.$ Since $B_k$ is a complete first return to $B_{k-1}$ beginning in $B_{k-1}a_k$ i.e.,  $R_{a_k}^{(k-1)}=B_k,$ it follows that \begin{equation}p_k=p_{a_k}^{(k)}=p_{a_k}^{(k-1)}=|R_{a_k}^{(k-1)}|-b_{k-1}=b_k-b_{k-1}.\end{equation} It follows immediately from our recursive definition of the $p_a^{(k)}$ that \[p_k=\min\{p^{(k)}_a|\,a\in \A\}.\]

\begin{lemma}\label{count}Let $k\in \nats$ and let $J_k$ denote the interval $ [b_k-2p_k+2,b_k].$ If $v\in \varphi^{-1}(k)$ then $|v|\in J_k$ and, for each $m\in J_k,$ the set $\varphi^{-1}(k)$ contains precisely $d(m,J_k)+1$ distinct words of length $m,$ where $d(m,J_k)$ is the minimal distance between $m$ and the two boundary points of the interval $J_k.$ In particular $|\varphi^{-1}(k)|=p_k^2.$ 
\end{lemma}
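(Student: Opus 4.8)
The plan is to read off $\varphi^{-1}(k)$ from the palindromic-closure decomposition $B_k=x_kS_k\overline{x_k}=B_{k-1}a_k\overline{x_k}$ together with Lemma~\ref{char}, which says that a non-empty word $v$ lies in $\varphi^{-1}(k)$ precisely when $v$ is a factor of $B_k$ containing $S_k$, and that every such $v$ is uni-occurrent in $B_k$. Thus $\varphi^{-1}(k)$ is exactly the set of ``windows'' of $B_k$ that straddle the unique occurrence of $S_k$, and I will parametrise these windows by how far they extend on each side.

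First I would record the two length identities that follow directly from the decomposition. Comparing $|B_{k-1}a_k|=|x_kS_k|$, i.e. $b_{k-1}+1=|x_k|+|S_k|$, with $|B_k|=2|x_k|+|S_k|$ and using $p_k=b_k-b_{k-1}$ gives $|x_k|=p_k-1$, and hence $|S_k|=b_k-2|x_k|=b_k-2p_k+2$, which is the left endpoint of $J_k$. Now fix the unique occurrence of $S_k$ inside $B_k=x_kS_k\overline{x_k}$; it occupies the $|S_k|$ positions immediately after the prefix $x_k$ (of length $p_k-1$) and immediately before the suffix $\overline{x_k}$ (of length $p_k-1$). For $v\in\varphi^{-1}(k)$, since $S_k$ is uni-occurrent in $B_k$ by Lemma~\ref{uni}, it is a fortiori uni-occurrent in $v$; so the occurrence of $v$ in $B_k$ must align its copy of $S_k$ with the central one, and $v$ decomposes uniquely as $v=uS_kw$ where $u$ is a suffix of $x_k$ and $w$ is a prefix of $\overline{x_k}$. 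Setting $(i,j)=(|u|,|w|)\in\{0,\dots,p_k-1\}^2$, the pair $(i,j)$ is recovered from $v$ as the offset of the unique occurrence of $S_k$ in $v$, so $v\mapsto(i,j)$ is injective. Conversely, for any $(i,j)\in\{0,\dots,p_k-1\}^2$ the factor of $B_k$ obtained by extending the occurrence of $S_k$ by $i$ letters of $x_k$ to the left and $j$ letters of $\overline{x_k}$ to the right is a factor of $B_k$ containing $S_k$, hence lies in $\varphi^{-1}(k)$ by Lemma~\ref{char}; so the injection is a bijection $\varphi^{-1}(k)\to\{0,\dots,p_k-1\}^2$.

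Under this bijection $|v|=|S_k|+i+j=(b_k-2p_k+2)+(i+j)$ with $0\le i+j\le 2p_k-2$, which immediately yields $|v|\in J_k$. Fixing $m=(b_k-2p_k+2)+r\in J_k$, the words of $\varphi^{-1}(k)$ of length $m$ correspond to the pairs $(i,j)\in\{0,\dots,p_k-1\}^2$ with $i+j=r$, and the number of such lattice points on the diagonal is $\min(r,\,2p_k-2-r)+1$. Since $r$ and $2p_k-2-r$ are exactly the distances from $m$ to the endpoints $b_k-2p_k+2$ and $b_k$ of $J_k$, this equals $d(m,J_k)+1$. Summing over $m\in J_k$ (equivalently over $0\le r\le 2p_k-2$) counts all of $\{0,\dots,p_k-1\}^2$, giving $|\varphi^{-1}(k)|=p_k^2$.

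The main obstacle is the injectivity of $(i,j)\mapsto v$: one must rule out a word $v\in\varphi^{-1}(k)$ being realised by two different windows, which would happen if $S_k$ occurred in $v$ in two positions. This is precisely where Lemma~\ref{uni} (uni-occurrence of $S_k$ in $B_k$) is indispensable; once that is in hand, the remainder is the bookkeeping with the two length formulas and the elementary count of lattice points on a diagonal of a square.
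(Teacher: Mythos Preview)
Your proof is correct and follows essentially the same route as the paper: identify $\varphi^{-1}(k)$ with the factors of $B_k$ straddling the unique occurrence of $S_k$, compute $|x_k|=p_k-1$ and $|S_k|=b_k-2p_k+2$, and count windows of each length. The only cosmetic difference is that the paper invokes the uni-occurrence of $v$ in $B_k$ (from Lemma~\ref{char}) to conclude distinct windows give distinct words, whereas you invoke the uni-occurrence of $S_k$ in $B_k$ (Lemma~\ref{uni}) to recover $(i,j)$ from $v$; these are equivalent, and your explicit bijection $\varphi^{-1}(k)\to\{0,\dots,p_k-1\}^2$ together with the lattice-point count on diagonals makes rigorous what the paper compresses into ``we simply use the fact\ldots''.
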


\begin{proof} In view of Lemma ~\ref{char} we have that $v\in \varphi^{-1}(k)$ if and only if $v$ is a factor of $B_k$ which contains $S_k$ as a subfactor 
It follows that $|S_k|\leq |v|\leq |B_k|.$ Also, since $B_k=B_{k-1}a_k\overline{x_k},$ by (2) we deduce that  $p_k=b_k-b_{k-1}=|x_k|+1.$ Furthermore, as $B_k=x_kS_k\overline{x_k}$ we have $|S_k|=|B_k|-2|x_k|=b_k-2(p_k-1)=b_k-2p_k+2.$ Hence $b_k-2p_k+2\leq |v|\leq b_k.$
Now suppose $m\in J_k.$ To see that $\varphi^{-1}(k)$ contains  $d(m,J_k)+1$ distinct words of length $m$ we simply use the fact that each $v\in \varphi^{-1}(k)$ contains $S_k$ and is uni-occurrent in $B_k$ (see Lemma~\ref{char}). Finally,
\[|\varphi^{-1}(k)|=1 +2 +\cdots +(p_k-1) +p_k +(p_k-1) +\cdots +2 +1=2\left(\frac{p_k(p_k-1)}{2}\right)+p_k=p_k^2.\]
\end{proof} 

Let $k\in \nats$ and $v\in \varphi^{-1}(k).$ 
As a consequence of Lemma~\ref{char}, there exists a unique decomposition $B_k=u_1vu_2$ with $u_1,u_2\in \A^*.$  In particular, $vu_2$ is right special in $x$ and $u_1v$ is left special in $x.$
Now suppose $u$ is a closed factor of $x$ with frontier $v.$ In particular $u$ begins and ends in $v.$ Since $x$ is recurrent and aperiodic, it follows that $vu_2$ is a prefix of $u$ and $u_1v$ is a suffix of $u,$ whence $u_1uu_2$ is a complete first return to $B_k.$ In fact, $u_1uu_2$ begins and ends in $B_k$ and does not admit other occurrences of $B_k$ for otherwise $v$ would occur in $u$ internally (meaning not as a prefix or as a suffix). Thus $u_1uu_2=R^{(k)}_a$ for some $a\in \A.$

\begin{definition} Let $u$ be a closed factor of $x$ and $a\in \A.$ We say $u$ is of {\it type} $a$ if and only if either $u=a$ or, if $u$ is closed with frontier $v\in \A^+,$ then $u_1uu_2=R^{(k)}_a$ where $k=\varphi(v)$ and $B_k=u_1vu_2.$
\end{definition}

If $u$ is a closed factor of $x$ of type $a\in \A$ and frontier $v\in \A^+,$ then 
\begin{equation}|u|-|v|=|R^{(k)}_a|-(|u_1|+|u_2|+|v|)=|R^{(k)}_a|-|B_k|=p^{(k)}_a,\end{equation}
where $k=\varphi(v).$ We observe that the equality $|u|-|v|=p^{(k)}_a$ in (3) also holds in case $u\in \A$ taking $v=\varepsilon$ and $k=0.$

Let $C(x)$ denote the set of all closed factors of $x$ and  for each $u\in C(x)$ let $\fr(u)\in \A^*$ denote its frontier. By convention we define $\fr(a)=\varepsilon$ for each $a\in \A.$ For each $k\in \omega$ and $a\in \A$ we let $C_{k,a}(x)$ denote the set of all closed factors $u$ of $x$ of type $a$ whose frontier $\fr(u)$ belongs to $\varphi^{-1}(k).$

\begin{lemma}\label{bijection} The sets $\{C_{k,a}(x)\,:\, k\in \omega,\, a\in \A\}$ define a partition of $C(x)$ and $\fr: C_{k,a}(x)\rightarrow \varphi^{-1}(k)$ is a bijection.
\end{lemma}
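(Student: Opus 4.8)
The plan is to establish the two assertions of Lemma~\ref{bijection} in turn: first that the sets $C_{k,a}(x)$ cover $C(x)$ and are pairwise disjoint, and second that for each fixed pair $(k,a)$ the frontier map restricts to a bijection $C_{k,a}(x)\to\varphi^{-1}(k)$.

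For the partition statement, I would argue as follows. Every closed factor $u$ of $x$ falls into exactly one class: if $u\in\A$ then $\fr(u)=\varepsilon\in\varphi^{-1}(0)$ and by definition $u$ is of type $u$, so $u\in C_{0,u}(x)$ and lies in no other class since $\varepsilon$ is only in $\varphi^{-1}(0)$ and a one-letter word has a single type; if $u\notin\A$ then $v:=\fr(u)\in\A^+$ is well defined, $k:=\varphi(v)$ is uniquely determined, and the discussion preceding the Definition shows that the complete first return $u_1uu_2$ to $B_k$ (with $B_k=u_1vu_2$ the unique decomposition coming from Lemma~\ref{char}) equals $R^{(k)}_a$ for a \emph{unique} $a\in\A$, because the $t$ complete first returns $R^{(k)}_b$, $b\in\A$, are pairwise distinct. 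Hence $u\in C_{k,a}(x)$ for exactly one $(k,a)$. This gives the partition.

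For the bijection, fix $k\in\omega$ and $a\in\A$. I would handle $k=0$ separately and trivially: $\varphi^{-1}(0)=\{\varepsilon\}$, and $C_{0,a}(x)=\{a\}$ (the only closed factor of type $a$ with empty frontier is the letter $a$ itself, which is indeed a factor of $x$ since $x$ is recurrent over $\A$), so $\fr$ is the obvious bijection. For $k\geq1$: \textbf{Injectivity.} Suppose $u,u'\in C_{k,a}(x)$ with $\fr(u)=\fr(u')=v$. By Lemma~\ref{char} the decomposition $B_k=u_1vu_2$ is unique, so $u_1uu_2=R^{(k)}_a=u_1u'u_2$, and cancelling the common prefix $u_1$ and suffix $u_2$ gives $u=u'$. \textbf{Surjectivity.} Given $v\in\varphi^{-1}(k)$, Lemma~\ref{char} gives the unique decomposition $B_k=u_1vu_2$; since $vu_2$ is right special and $u_1v$ is left special in $x$, and since $R^{(k)}_a$ is the complete first return to $B_k$ beginning in $B_ka$, I set $u$ to be the word obtained from $R^{(k)}_a$ by deleting the prefix $u_1$ and the suffix $u_2$, i.e. $R^{(k)}_a=u_1uu_2$. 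One then checks that $u$ is closed with frontier exactly $v$: it begins and ends in $v$ by construction; its longest border cannot be longer than $v$, for a longer common prefix/suffix would force, after reattaching $u_1,u_2$, an extra occurrence of $B_k$ inside $R^{(k)}_a$, contradicting that $R^{(k)}_a$ is a complete first return; and $v$ occurs in $u$ only as a prefix and as a suffix, since an internal occurrence of $v$ in $u$ would likewise yield a third occurrence of $B_k$ in $R^{(k)}_a$ (using the decomposition $B_k=u_1vu_2$ and the special structure to recover a full copy of $B_k$ around that occurrence). Hence $u\in C_{k,a}(x)$ and $\fr(u)=v$.

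The main obstacle is the surjectivity argument, and specifically the verification that the word $u$ I produce really is closed with frontier precisely $v$ rather than some longer border. The delicate point is passing between occurrences of the short word $v$ inside $u$ and occurrences of the long bispecial word $B_k$ inside $R^{(k)}_a$: I must use that $vu_2$ is right special and $u_1v$ is left special (so that any occurrence of $v$ in a factor of $x$ that has enough room on both sides extends uniquely to an occurrence of $B_k=u_1vu_2$), together with the fact that $v$ is uni-occurrent in $B_k$ (Lemma~\ref{char}), to turn a hypothetical bad occurrence of $v$ into a forbidden extra occurrence of $B_k$. Once that equivalence between "$v$-occurrences in $u$" and "$B_k$-occurrences in $R^{(k)}_a$" is pinned down cleanly, both the frontier computation and the uni-occurrence of the border follow, and the rest is bookkeeping.
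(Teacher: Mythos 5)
Your proposal is correct and follows essentially the same route as the paper: the partition and injectivity come from the unique type together with the unique factorization $B_k=u_1\,\fr(u)\,u_2$, and surjectivity from setting $u=u_1^{-1}R^{(k)}_au_2^{-1}$. The paper simply asserts that this $u$ is closed with frontier $v$, whereas you spell out the verification (translating occurrences of $v$ in $u$ into occurrences of $B_k$ in $R^{(k)}_a$ via the discussion preceding the Definition), which is a legitimate filling-in of the same argument rather than a different one.
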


\begin{proof} Each closed factor $u\in C(x)$ has a unique type and its frontier $\fr(u)$ belongs to $\varphi^{-1}(k)$ for a unique value of $k\in \omega.$ Whence each closed factor $u$ of $x$ belongs to a unique $C_{k,a}(x).$ By definition, if $u\in C_{k,a}(x)$ then $\fr(u)\in \varphi^{-1}(k).$ Moreover $u$ is uniquely determined by its frontier $\fr(u)$ and its type. In fact, if $u\in C_{k,a}(x)$ then $u_1uu_2=R^{(k)}_a$ where $u_1,u_2$ are determined by the (unique) factorization $B_k=u_1\fr(u)u_2.$
This proves $\fr$ is injective. To see that $\fr$ is also surjective, let $v\in \varphi^{-1}(k).$ Then we can write $B_k=u_1vu_2$ for some $u_1,u_2\in \A^*.$
Hence $R^{(k)}_a$ begins in $u_1$ and ends in $u_2.$
It follows that $u=u_1^{-1}R^{(k)}_au_2^{-1}$ is a closed factor of $x$ of type $a$ and $\fr(u)=v.$
\end{proof}

\noindent{\it Proof of Theorem~\ref{main}.} Fix $n\in \nats.$ By Lemma~\ref{bijection} we have 
\[f^c_x(n)=|C(x)\cap \A^n|=\sum_{\substack{k\in \omega\\a \in \A}}|C_{k,a}(x)\cap \A^n|.\]

Now assume $u\in C_{k,a}\cap \A^n$ and put $v=\fr(u)\in \varphi^{-1}(k).$ Then by (3) we have that $n=|u|=|v|+p^{(k)}_a.$ By Lemma~\ref{count}, $|v|=n-p^{(k)}_a\in J_k=[b_k-2p_k+2,b_k].$ By Lemma~\ref{bijection} the number of words $u\in C_{k,a}(x)\cap\A^n$ is equal to the number of words $v\in \varphi^{-1}(k)$ of length $n-p^{(k)}_a$ which by Lemma~\ref{count} is equal to $d(n-p^{(k)}_a,J_k)+1=d(n,I_{k,a})+1$ where 
$I_{k,a}=[b_k-2p_k+p^{(k)}_a+2,b_k+p^{(k)}_a].$ This completes the proof of Theorem~\ref{main}.\qed

In case $|\A|=2,$ i.e., $x$ is Sturmian, each bispecial factor $B_k$ has precisely two first returns, the shortest one is of length $p_k$, and we let $q_k$ denote the length of the other first return. So for fixed $a\in \A$ and $k\in \nats$ we have \[p^{(k)}_a=\begin{cases} p_k,\,\,\,\mbox{if}\,\,\,a=a_k;\\q_k,\,\,\,\mbox{otherwise.}\end{cases}\]
If $a=a_k$ then $I_{k,a}=[q_k,q_k+2p_k-2]$ and if $a\neq a_k$ then $I_{k,a}=[2q_k-p_k,2q_k+p_k-2].$
Putting $P_k= [q_k,q_k+2p_k-2]$ and $Q_k=[2q_k-p_k,2q_k+p_k-2],$ we obtain that for a Sturmian word $x$ the number of closed factors of $x$ of each length $n$ is given by
\begin{equation}f^c_x(n)= \sum_{\substack{k\in \omega\\n\in P_k}} (d(n,P_k)+1) + \sum_{\substack{k\in \omega\\n\in Q_k}} (d(n,Q_k)+1).\end{equation}

\begin{example} \rm Consider the Fibonacci word \[x=abaababaabaababaa\cdots\] fixed by the morphism $a\mapsto ab,$ $b\mapsto a.$ Then $p_k=F_k$ and $q_k=F_{k+1}$ where the sequence $(F_k)_{k\in \omega}$ is the Fibonacci sequence given by $F_0=F_1=1$ and $F_{k+1}=F_k+F_{k-1}$ for $k\geq 1.$

Table~\ref{tab:fib} shows the number of closed factors of length $n\leq15$ in the Fibonacci word computed using  (4).

\begin{table}[H]
    \centering
     \caption{\it The number of closed factors in the Fibonacci word.}
    \begin{tabular}{|c|c|c|c|c|c|c|c|c|c|c|c|c|c|c|c|}
\hline
 \textbf n  &\textbf 1 &\textbf 2 &\textbf 3 &\textbf 4 &\textbf 5 & \textbf 6 & \textbf 7 & \textbf 8 & \textbf 9 & \textbf{10} & \textbf{11} & \textbf{12} & \textbf{13} & \textbf{14} & \textbf{15} \\
 \hline
$ \boldsymbol{f_{x}^c(n)}$ & 2 & 1 & 2 & 3 & 4 & 3 & 4 & 5 & 6 & 5 & 6 & 7 & 8 & 9 & 10\\
 \hline
 \end{tabular}
    \label{tab:fib}
\end{table}

For example, for $n=11$ we must determine those $k$ for which either $11\in P_k$ or $11\in Q_k.$
It is easily checked that $11$ only belongs to $P_4=[8,16],$ $Q_3=[7,11]$ and $Q_4=[11,19].$
So\[f^c_{x}(11)=d(11,P_4)+1 + d(11, Q_3)+1 + d(11,Q_4)+1=4+1+1=6.\]

The graph of the function $f_{x}^c$ is shown in Figure~\ref{fig:Fib}. The function is clearly not monotone. 
\end{example}
\begin{figure}[H]
\centering
{\includegraphics[width=1\linewidth]{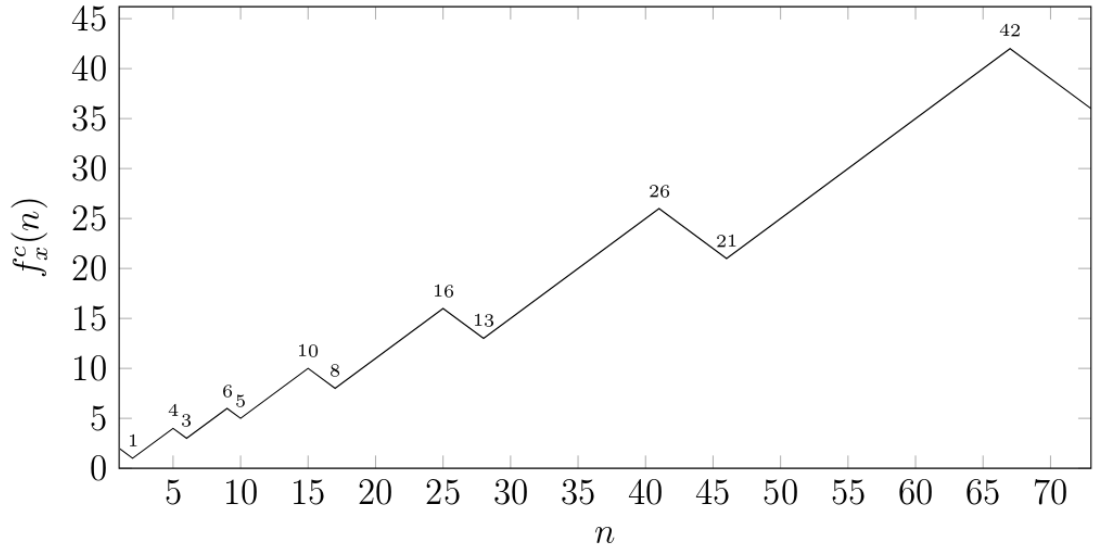}}
\caption{The number of closed factors in the Fibonacci word.}
\label{fig:Fib}
\end{figure}

Figure~\ref{fig:Trib} illustrates the behavior of the number of closed factors of the Tribonacci word $x\in \{a,b,c\}^\nats$ defined as the fixed point of the morphism $a\mapsto ab,$ $b\mapsto ac,$ $c\mapsto a.$

\begin{figure}[H]
\centering
{\includegraphics[width=1\linewidth]{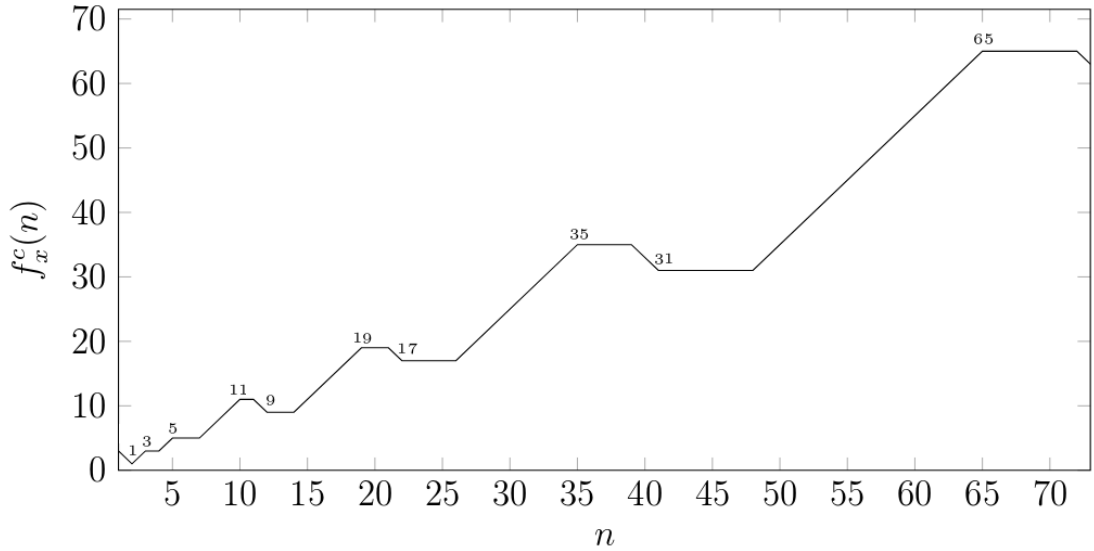}}
\caption{The number of closed factors in the Tribonacci word.}
\label{fig:Trib}
\end{figure}

Our last example (Figure~\ref{fig:x}) illustrates the behavior of the number of closed factors of the Sturmian word $x_r\in \{0,1\}^\nats$ of slope $\alpha$ whose continued fraction expansion begins with $\alpha=[0;3,1,5,4,1,5,1,1,1,2,1,3,2,2,4,\dots]$.

\begin{figure}[H]
\centering
{\includegraphics[width=0.96\linewidth]{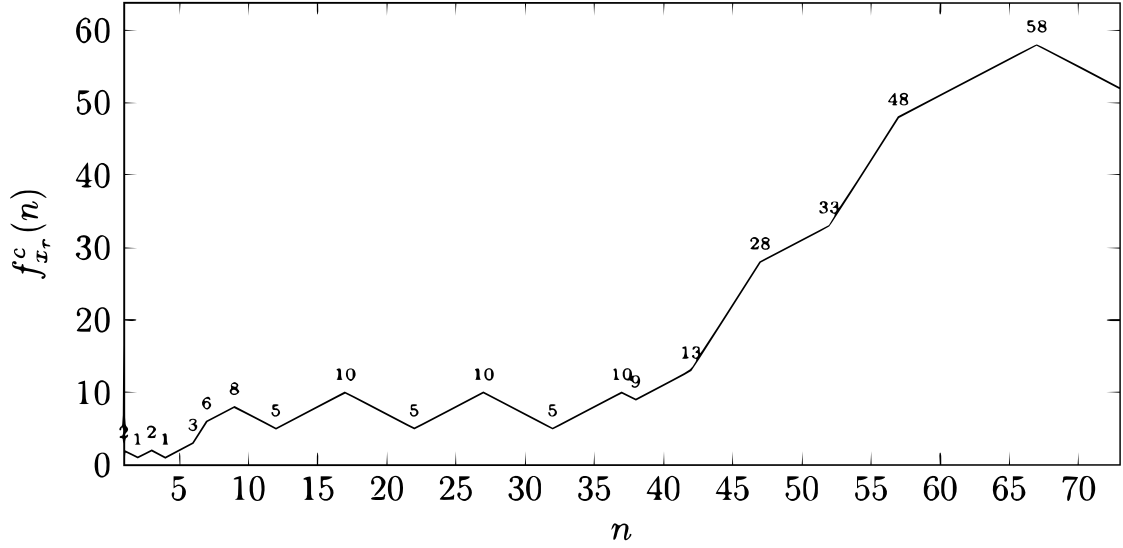}}
\caption{The number of closed factors in the word $x_r$.}
\label{fig:x}
\end{figure}

The above examples suggest that the function $f_{x}^c(n)$ tends to infinity, although it need not be monotone and may contain plateaus and inflection points. Our next result establishes this fact:
\begin{corollary}\label{inf}
If $x\in \A^\nats$ is an Arnoux-Rauzy word, then  
\[
\liminf_{n\rightarrow\infty}{f_x^c(n)}=+\infty.\]
\end{corollary}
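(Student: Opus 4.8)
The plan is to use the explicit formula from Theorem~\ref{main} together with the recursive structure of the sequences $(b_k)$ and $(p_a^{(k)})$. Recall $f_x^c(n)=\sum_{a\in\A}F(a,n)$ where $F(a,n)=\sum_{k:\,n\in I_{k,a}}(d(n,I_{k,a})+1)$, so it suffices to show that for every fixed bound $M$, there is an $N$ such that for all $n\ge N$ the total sum $\sum_{a\in\A}F(a,n)$ exceeds $M$. The key observation is that $d(n,I_{k,a})+1\ge 1$ whenever $n\in I_{k,a}$, so $f_x^c(n)$ is at least the number of pairs $(k,a)$ with $n\in I_{k,a}=[b_k-2p_k+p_a^{(k)}+2,\;b_k+p_a^{(k)}]$. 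Thus I want to show: for every $M$, all sufficiently large $n$ lie in at least $M$ of the intervals $I_{k,a}$.

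First I would record the relevant facts about how the intervals are placed. For a fixed $a\in\A$, successive intervals $I_{k,a}$ and $I_{k+1,a}$ overlap or are adjacent for large $k$: since $b_{k+1}-b_k=p_{k+1}=p_{a_{k+1}}^{(k+1)}$ and $p_a^{(k+1)}\ge p_a^{(k)}$, one checks that the left endpoint of $I_{k+1,a}$ is at most the right endpoint of $I_{k,a}$ plus a controlled amount; in fact the lengths of the intervals, $|I_{k,a}|=2p_k-2$ for $a=a_k$-type and $2p_k-2$ in general (the interval $I_{k,a}$ has length $2p_k-2$ regardless of $a$, since both endpoints shift by $p_a^{(k)}$), tend to infinity because $p_k=b_k-b_{k-1}\to\infty$ (the $b_k$ are strictly increasing integers, so the gaps are unbounded; more carefully, for Arnoux-Rauzy words with any directive sequence, $p_k\to\infty$ since each $p_a^{(k)}$ is nondecreasing and, whenever $a_k$ takes a value $b$, $p_b$ jumps). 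The upshot is that along each letter $a$ the intervals $\{I_{k,a}\}_{k}$ eventually cover a terminal segment of $\nats$ — i.e. every large $n$ lies in at least one $I_{k,a}$ — and moreover, because consecutive intervals grow and overlap, a given large $n$ lies in a number of them that grows with $n$.

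To make the counting quantitative I would argue as follows. Fix $M$. Because $p_k\to\infty$, choose $K$ with $p_k\ge M+2$ for all $k\ge K$. For $k\ge K$ and each $a\in\A$, the interval $I_{k,a}$ has length $2p_k-2\ge 2M$. Now consider the chain $I_{K,a},I_{K+1,a},\dots$ for a fixed $a$: since the left endpoint of $I_{k+1,a}$ equals $b_{k+1}-2p_{k+1}+p_a^{(k+1)}+2$ and the right endpoint of $I_{k,a}$ equals $b_k+p_a^{(k)}$, their difference is $(b_{k+1}-b_k)-2p_{k+1}+(p_a^{(k+1)}-p_a^{(k)})=p_{k+1}-2p_{k+1}+(p_a^{(k+1)}-p_a^{(k)})=-p_{k+1}+(p_a^{(k+1)}-p_a^{(k)})$, which is $\le 0$ because $p_a^{(k+1)}-p_a^{(k)}\in\{0,p_{k+1}\}$ and equals $0$ exactly when $a=a_{k+1}$. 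Hence consecutive intervals in the chain always overlap (or just touch), so their union is a single interval tending to $+\infty$; thus every sufficiently large $n$ is in some $I_{k,a}$ with $k\ge K$. Finally, to see that a large $n$ is in \emph{many} such intervals: take $n$ large and let $k$ be maximal with $n\in I_{k,a}$ for some $a$; since $|I_{k,a}|\ge 2M$ and $n$ is within distance $|I_{k,a}|/2\ge M$ of an endpoint only for few $n$, for all but finitely many $n$ one has $d(n,I_{k,a})\ge$ something growing, OR $n$ lies in the overlap of $I_{k,a}$ with $I_{k-1,a}$ or $I_{k+1,a}$, forcing membership in additional intervals. Iterating this down the chain, any large $n$ is caught in at least $M$ intervals, giving $f_x^c(n)\ge M$.

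The main obstacle I anticipate is making the last step — "a large $n$ lies in many intervals" — genuinely clean rather than hand-wavy: one must handle $n$ that sit near an endpoint of every interval containing them, and one must use both the within-interval weight $d(n,I_{k,a})+1$ (which is large for $n$ near the middle) and the multiplicity across $k$ and across $a$ (which is large for $n$ near the ends, in the overlap regions). The cleanest route is probably to fix $M$, pick $K$ with $p_k> M$ for $k\ge K$, and show that for $n$ beyond $b_K+\max_a p_a^{(K)}$ one can always find, for at least one letter $a$, an index $k\ge K$ with $n$ in the "central half" of $I_{k,a}$, yielding $d(n,I_{k,a})+1> M$ and hence $f_x^c(n)>M$ directly from a single summand; this sidesteps the need to count many intervals. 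The remaining point to verify is that the central halves of the $I_{k,a}$ (over all $k\ge K$, $a\in\A$) cover all sufficiently large integers, which follows from the overlap computation above since if $n$ falls in the outer part of $I_{k,a}$ near the right end, then (by the touching/overlap of $I_{k,a}$ and $I_{k+1,a}$, whose length is even larger) $n$ is comfortably inside $I_{k+1,a}$, and repeating this at most twice lands $n$ in a central half.
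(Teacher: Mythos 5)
Your overall strategy---reduce, via Theorem~\ref{main}, to showing that every sufficiently large $n$ lies at distance at least $m$ from both endpoints of some single interval $I_{k,a}$, so that one summand $d(n,I_{k,a})+1$ already exceeds $m$---is exactly the strategy of the paper's proof, and that reduction is sound. The gap is in the covering/overlap step on which your ``clean route'' rests. The left endpoint of $I_{k+1,a}$ minus the right endpoint of $I_{k,a}$ equals
\[
\bigl(b_{k+1}-2p_{k+1}+p_a^{(k+1)}+2\bigr)-\bigl(b_k+p_a^{(k)}\bigr)
=-p_{k+1}+\bigl(p_a^{(k+1)}-p_a^{(k)}\bigr)+2,
\]
not $-p_{k+1}+(p_a^{(k+1)}-p_a^{(k)})$: you dropped the $+2$. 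Since $p_a^{(k+1)}-p_a^{(k)}$ is $0$ when $a=a_{k+1}$ and $p_{k+1}$ otherwise, this difference is $-p_{k+1}+2$ in the first case but $+2$ in the second. So for a \emph{fixed} letter $a$, consecutive intervals $I_{k,a}$ and $I_{k+1,a}$ are separated by a gap of one integer whenever $a\neq a_{k+1}$; the fixed-letter chain does not cover a terminal segment of $\nats$ (for the Fibonacci word, $I_{3,a}=[5,9]$ and $I_{4,a}=[11,19]$ miss $n=10$). Moreover, even in the case where two consecutive members genuinely meet, ``touching'' gives you nothing: an $n$ near the right end of $I_{k,a}$ that merely enters $I_{k+1,a}$ lands near its \emph{left} end, not in its central half. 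Hence the concluding step ``repeating this at most twice lands $n$ in a central half'' does not go through along a fixed-letter chain.

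The repair is to change letter as $k$ advances, which is what the paper does: it follows the chain $I_k:=I_{k,a_k}=[b_{k-1}+2,\,b_k+p_k]$, whose consecutive members overlap in roughly $p_k$ integers, and shows by a short case analysis that if $n\in I_{k+1}$ is within $m$ of the right (resp.\ left) endpoint, then $n$ lies at distance at least $m$ from both endpoints of $I_{k+2}$ (resp.\ of $I_k$), using $p_j-1>2m$ for all indices in play. Your corrected computation in fact points to the same phenomenon, since the large-overlap case is precisely $a=a_{k+1}$. One further small point: your first justification that $p_k\to\infty$ (``strictly increasing integers have unbounded gaps'') is false as stated; what is true, and what you gesture at in the parenthetical, is that each $p_a^{(k)}$ is nondecreasing and strictly increases whenever $a_{k+1}\neq a$, so, every letter occurring infinitely often in the directive sequence of an Arnoux--Rauzy word, each $p_a^{(k)}\to\infty$ and hence so does $p_k=\min_{a\in\A}p_a^{(k)}$.
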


\begin{proof}
For each $k\in \omega$ set $I_k=[b_k-p_k+2,b_k+p_k],$ i.e., $I_0=[1,1]$ and $I_k=I_{a_k,k}$ for $k\geq 1.$
Since $p_k\geq 1$ for each $k\in \nats$ it follows that $\nats=\bigcup_{k\in \omega}I_k.$ 
Given $m\in \nats$, pick $j$ such that $p_j-1>2m$, and put $N= b_j+2.$ We will show that $f_x^c(n)\geq m$ for every $n\geq N.$ Notice that since $b_k=b_{k-1}+p_k$, the left hand endpoint of the interval $I_{j+1}$ is $b_j+2.$ Thus for each $n\geq N$ there exists a positive integer $k\geq j$, such that $n \in I_{k+1}.$ We have $|I_{k+1}|=2p_{k+1}-2\geq 2p_j-2>4m$. If $d(n,I_{k+1})\geq m,$ then it follows from Theorem~\ref{main} that $f_x^c(n)\geq m.$ Otherwise we must have either i) $b_{k+1}+p_{k+1}-n<m$ or 
ii) $n-(b_k+2)<m.$ In case i)  we have 

\begin{equation}n-b_{k+1}>p_{k+1}-m\geq p_j-m>m+1.\end{equation} 
Since $m+1\geq 2,$ we have that $n$ also belongs to $I_{k+2}.$ We will show that $d(n,I_{k+2})\geq m.$
By (5) we have $n-(b_{k+1}+2)\geq m.$ Also 
\[b_{k+2}+p_{k+2}-n\geq b_{k+2}+p_{k+2}-(b_{k+1}+p_{k+1})
=2p_{k+2}-p_{k+1}\geq p_{k+2}\geq p_j > 2m+1.\] Thus $d(n,I_{k+2})\geq m$ and hence by Theorem~\ref{main}$f_x^c(n)\geq m.$ 

In case ii) $n < b_k+2 +m\leq b_k+2m+1<b_k+p_k,$ and hence $n\in I_k.$ We will show that $d(n,I_k)\geq m.$ In fact, $b_k+p_k-n>p_k-2-m\geq p_j-2-m\geq m.$  Moreover, since $n\in I_{k+1},$ we have that $n-(b_{k-1}+2) \geq b_k+2-(b_{k-1}+2)=p_k\geq p_j> 2m+1$, and thus $d(n,I_k)\geq m.$
\end{proof}

While the previous result applies to Arnoux-Rauzy words, for a general aperiodic word $x$ the limit inferior of the function $f_x^c(n)$ need not be infinite. For example, in the case of the regular paperfolding word one has that $\liminf_{n\rightarrow\infty} f_x^c(n)=0.$  In fact, in \cite{SS} the authors exhibit an $11$-state automaton which accepts the base $2$ representation of those $n$ for which there is a closed factor of the regular paperfolding word of length $n$ (see Figure 1 in \cite{SS}). As another perhaps simpler example, let $x$ be the fixed point beginning in $a$ of the $2$-uniform morphism $\varphi$  
on the alphabet $\{a,b,c,d\}$ given by $\varphi: a\mapsto ac, b\mapsto ad, c\mapsto bc, d\mapsto bd.$ Then it is easily shown that all factors of $x$ of length $2^n$ ($n\in \nats)$ are open.  We remark that this last example is closely related to the regular paperfolding word. In fact, the regular paperfolding word is the image of the fixed point of $\varphi$ under the mapping which sends $a,c$ to $0$ and $b,d$ to $1.$ 

\section{Open Sturmian words}

\noindent Let  $\A$ be a nonempty set and $w=w_1w_2\cdots w_n\in \A^+$ a word of length $n$ over the alphabet $\A.$ Let $A_w=\{1\leq i\leq n\,|\, w_1\cdots w_i\,\,\mbox{is closed}\} $ and similarly $A'_w=\{1\leq i\leq n\,|\, w_i\cdots w_n\,\,\mbox{is closed}\}.$ Also let $B_w=\{1,2,\ldots,n\}\setminus A_w$ and $B'_w=\{1,2,\ldots,n\}\setminus A'_w.$ Thus $|A_w|$ (resp. $|A'_w|)$ is the number of closed prefixes (resp. suffixes) of $w$ while $|B_w|$ (resp. $|B'_w|)$ is the number of open prefixes (resp. suffixes) of $w.$ Following notation used  in \cite{Fici0} we let $H_w$ (resp. $K_w)$ denote the length of the shortest unrepeated prefix (resp. suffix) of $w.$ Therefore $|A_w|=H_w$ and $|A'_w|=K_w.$ 

\begin{lemma}\label{-A} Let $w=w_1w_2\cdots w_n\in \A^+$  be any open word. Assume that $|A_w|=|B'_w|.$ Then $w$ may be factored as a concatenation of two closed words. \end{lemma}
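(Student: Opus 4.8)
The plan is to locate a single index $i$ at which both the prefix $w_1\cdots w_i$ and the suffix $w_{i+1}\cdots w_n$ are closed, which immediately gives the desired factorization. First I would set up a counting/parity argument using the hypothesis $|A_w|=|B'_w|$. Recall $|A_w|=H_w$ counts closed prefixes and, since $A'_w$ and $B'_w$ partition $\{1,\dots,n\}$, we have $|B'_w|=n-|A'_w|=n-K_w$. So the hypothesis reads $H_w = n - K_w$, equivalently $H_w + K_w = n$. The key observation is that a prefix $w_1\cdots w_i$ is closed precisely when $i \in A_w$, and since the shortest unrepeated (i.e.\ open) prefix has length $H_w$, the set $A_w$ of closed-prefix lengths is exactly $\{1,2,\dots,H_w-1\}$ together with possibly larger values — actually I would want to recall from \cite{Fici0} the precise structure: the lengths of closed prefixes of $w$ form an \emph{initial segment} $\{1,\dots,H_w\}\setminus\{\text{something}\}$... more carefully, $i\in A_w$ iff $w_1\cdots w_i$ is closed, and the first open prefix occurs at length $H_w$; but prefixes longer than $H_w$ may again be closed. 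The cleaner route: $A'_w$ (closed suffixes) and symmetrically the closed suffixes of lengths $\{1,\dots,K_w\}\setminus\{\dots\}$. Rather than fight this, I would use the translation between suffixes of $w$ and prefixes: $w_i\cdots w_n$ is closed iff, reading indices from the right, the corresponding prefix length is $n-i+1$.

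The heart of the argument is then a pigeonhole between the two index sets. Consider the map $i \mapsto n-i$ or $i\mapsto n+1-i$ that sends prefix-lengths to suffix-start-positions. I want to show $A_w$ and (the image of) $A'_w$ together with the open-ness of $w$ force a common splitting point. Concretely: suppose for contradiction that for \emph{every} $i\in\{1,\dots,n-1\}$, either $w_1\cdots w_i$ is open or $w_{i+1}\cdots w_n$ is open. The lengths of closed prefixes include $1,\dots,H_w-1$ (every proper prefix shorter than the shortest unrepeated prefix is repeated, hence bordered with its longest border uni-occurrent — here I'd invoke Lemma~\ref{rich}-style reasoning or simply the definition of $H_w$: a prefix of length $<H_w$ is a repeated prefix, and a repeated prefix is closed because its longest border is the relevant repetition... this needs a small lemma, likely already in \cite{Fici0}). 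Symmetrically, suffixes of length $1,\dots,K_w-1$ are closed, i.e.\ $w_{i+1}\cdots w_n$ is closed whenever $n-i \le K_w-1$, i.e.\ $i \ge n-K_w+1$. Combining: for $i$ with $1\le i \le H_w-1$ the prefix is closed, so the suffix must be open, forcing $n-i \ge K_w$, i.e.\ $i \le n-K_w$; and for $i \ge n-K_w+1$ the suffix is closed so the prefix is open, forcing $i \ge H_w$. Since $H_w+K_w=n$, the range $i\in\{1,\dots,H_w-1\}$ is exactly $\{1,\dots,n-K_w-1\}$, which is consistent — so I also need to pin down the behavior at the boundary, namely what happens for the remaining value(s) of $i$, in particular $i = H_w = n-K_w$. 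At $i=H_w$: if $w_1\cdots w_{H_w}$ were closed and $w_{H_w+1}\cdots w_n$ (length $K_w$) were closed we'd be done; the assumption forces at least one open, and I'd use that the prefix of length exactly $H_w$ is the shortest \emph{unrepeated} one while $w$ itself is open to derive the contradiction.

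The main obstacle I anticipate is handling these boundary indices cleanly — specifically showing that the prefix of length $H_w$ and the suffix of length $K_w$ cannot \emph{both} fail to be closed given that $H_w+K_w=n$ and $w$ is open. I expect the resolution to use the characterization of open words via their shortest unrepeated prefix together with the fact that $H_w + K_w = n$ places the two ``critical cut points'' $H_w$ (from the left) and $n-K_w$ (from the right) at the \emph{same} location; an open word of length $n$ with this coincidence must admit this cut as a genuine concatenation of two closed pieces, otherwise one could extend either the closed-prefix structure or the closed-suffix structure past the critical length, contradicting minimality of $H_w$ or $K_w$. Once that single splitting index $i$ is produced with both $w_1\cdots w_i$ and $w_{i+1}\cdots w_n$ closed, the statement follows immediately. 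I would also double-check the degenerate cases ($H_w=n$, or $w$ of length $1$) separately, though the hypothesis that $w$ is open rules out the most degenerate ones.
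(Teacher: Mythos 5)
Your proposal has a genuine gap, and in fact rests on a false auxiliary claim. You assert that every prefix of length less than $H_w$ is closed (``a prefix of length $<H_w$ is a repeated prefix, and a repeated prefix is closed because its longest border is the relevant repetition''), and the symmetric statement for suffixes; your entire contradiction argument is built on $\{1,\dots,H_w-1\}\subseteq A_w$. This is false: a repeated prefix need not be bordered at all. For $w=abcabc$ one has $H_w=4$ (the prefixes $a$, $ab$, $abc$ are all repeated), yet the prefixes $ab$ and $abc$ are unbordered and hence open; here $A_w=\{1,4,5,6\}$, which is not an initial segment. The identity $|A_w|=H_w$ counts closed prefixes but says nothing about \emph{which} lengths they occupy, so the interval bookkeeping that follows ($i\le n-K_w$ versus $i\ge H_w$, and the analysis at the ``critical cut'' $i=H_w$) does not get off the ground. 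Moreover, even granting your setup, the decisive step is never carried out: the paragraph about the boundary index ends with ``I expect the resolution to use\dots'', which is precisely the part of the proof that is missing. A second, smaller point: the lemma as stated concerns an arbitrary alphabet and an arbitrary open word satisfying $|A_w|=|B'_w|$, so an argument routed through $H_w$ and $K_w$ and the Sturmian-flavoured structure of special factors is aiming at the wrong level of generality.

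The actual proof is a one-step counting argument that needs none of this. Since $w$ is open, $n\notin A_w$, so the shifted set $A_w+1=\{i+1:i\in A_w\}$ is contained in $\{2,\dots,n\}$; also $1\in B'_w$ (because $w=w_1\cdots w_n$ itself is open) while $1\notin A_w+1$, so $A_w+1\neq B'_w$. As $|A_w+1|=|A_w|=|B'_w|$ and two distinct finite sets of equal cardinality cannot be nested, $A_w+1\not\subseteq B'_w$, hence $(A_w+1)\cap A'_w\neq\emptyset$: there is an $i\in A_w$ with $i+1\in A'_w$, giving the closed factorization $w=(w_1\cdots w_i)(w_{i+1}\cdots w_n)$. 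You were close to this when you wrote ``suppose for contradiction that for every $i$ either the prefix or the suffix is open'' --- that supposition says exactly $A_w+1\subseteq B'_w$, which together with $1\in B'_w\setminus(A_w+1)$ forces $|B'_w|\ge|A_w|+1$, contradicting the hypothesis --- but you then abandoned this clean route in favour of the (false) structural description of $A_w$.
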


\begin{proof} Since $w$ is open it follows that $n\notin A_w$ and hence $A_w+1\subseteq \{2,3,\ldots,n\}$ where $A_w+1=\{i+1\,|\, i\in A_w\}.$ Again since $w$ is open we have that $1\in B'_w$ whence $A_w+1\neq B'_w.$
Finally $|A_w+1|=|A_w|=|B'_w|.$  Thus $A_w+1\nsubseteq B'_w$ and hence $A_w+1\cap A'_w\neq \emptyset.$ Pick $i\in A_w$ such that $i+1\in A'_w.$ Then $w_1\cdots w_i$ is a closed prefix of $w$ while $w_{i+1}\cdots w_n$ is a closed suffix of $w.$ \end{proof}

We note that if $w$ is a concatenation of two closed words, then it need not be the case that $|A_w|=|B'_w|.$ For example, $w=aababbab$ is the concatenation of the closed words $aa$ and $babbab$ while $|A_w|=2$ and $|B'_w|=4.$

Again borrowing from the notation in \cite{Fici0}, let $S^l(w)$ (resp. $S^r(w))$ denote the number of distinct left (resp. right) special factors of $w.$ As usual we consider the empty word $\varepsilon$ to be both left and right special. In what follows we restrict to binary words over the alphabet $\{a,b\}.$ We recall the following lemma from \cite{Fici0}:

\begin{lemma}[Lemma 6.2 in \cite{Fici0}]\label{A} Let $w\in \{a,b\}^+$ be any binary word. Then $S^l(w)=|w|-H_w$ and $S^r(w)=|w|-K_w.$
\end{lemma}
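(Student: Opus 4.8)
The plan is to prove the identity $S^r(w)=|w|-K_w$ and then deduce $S^l(w)=|w|-H_w$ by a reversal symmetry. For the symmetry, note that $v\mapsto\overline v$ restricts to a bijection between the left special factors of $w$ and the right special factors of $\overline w$, and that a suffix of $w$ occurs exactly once in $w$ if and only if its reversal occurs exactly once in $\overline w$; hence $H_w=K_{\overline w}$ and $S^l(w)=S^r(\overline w)=|\overline w|-K_{\overline w}=|w|-H_w$. So it suffices to treat $S^r$.

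Put $n=|w|$ and, for $0\le\ell\le n+1$, let $p_w(\ell)$ be the number of distinct factors of $w$ of length $\ell$, so $p_w(0)=1$ and $p_w(n+1)=0$. Each factor of length $\ell+1$ arises by appending a single letter to its length-$\ell$ prefix, so $p_w(\ell+1)=\sum_{|v|=\ell,\,v\in\Fac(w)}\#\{c\in\{a,b\}:vc\in\Fac(w)\}$. Over a binary alphabet a factor has $0$, $1$, or $2$ right extensions, the last case being exactly that of a right special factor; writing $s(\ell)$ for the number of right special factors of length $\ell$ and $r(\ell)$ for the number of length-$\ell$ factors with no right extension, this sum rearranges to the telescoping identity $p_w(\ell+1)-p_w(\ell)=s(\ell)-r(\ell)$. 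Summing over $\ell=0,\dots,n$ and using $p_w(0)=1$, $p_w(n+1)=0$ gives $S^r(w)=\big(\sum_{\ell=0}^n r(\ell)\big)-1$.

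It remains to identify $R:=\sum_{\ell=0}^n r(\ell)$, the number of factors of $w$ admitting no right extension. A factor $v$ has no right extension exactly when no occurrence of $v$ is followed by a letter, i.e.\ when the only occurrence of $v$ in $w$ is the terminal one; equivalently, $v$ is a suffix of $w$ that occurs exactly once (is unrepeated). The crucial step is the monotonicity observation that if the suffix of $w$ of length $\ell<n$ occurs exactly once, then so does the suffix of length $\ell+1$ (it contains the former as a proper suffix, hence occurs at most as often, and being a suffix occurs at least once). Since the length-$n$ suffix is $w$ itself, which always occurs exactly once, and $K_w$ is by definition the least length of a suffix of $w$ occurring exactly once, the suffixes of $w$ that occur exactly once are precisely those of lengths $K_w,K_w+1,\dots,n$; there are $n-K_w+1$ of them, so $R=n-K_w+1$ and $S^r(w)=n-K_w=|w|-K_w$.

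The only real obstacle is the combinatorial bookkeeping in the last paragraph: verifying that ``no right extension'' is equivalent to ``unrepeated suffix,'' and that this property is upward closed in the suffix length. One should also fix the convention that the empty word is counted as special exactly when it has two distinct one-letter extensions (equivalently when $w$ is not a power of a single letter); with this reading the degenerate cases, in particular $w=a^n$ where both sides of each identity equal $0$, are covered without any separate argument.
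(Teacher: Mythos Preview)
The paper does not give its own proof of this lemma: it is quoted verbatim as Lemma~6.2 of \cite{Fici0} and used without argument. So there is nothing to compare your approach against here.

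Your argument is correct. The telescoping identity $p_w(\ell+1)-p_w(\ell)=s(\ell)-r(\ell)$ is the standard first-difference bookkeeping for binary words, and your identification of the factors with no right extension as exactly the unrepeated suffixes, together with the upward-closure of that property, cleanly yields $S^r(w)=|w|-K_w$. The reversal reduction to the other identity is fine.

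One small point worth flagging: the present paper adopts the convention that $\varepsilon$ is \emph{always} both left and right special, whereas your proof (correctly, for the formula to hold in the degenerate case $w=a^n$) needs the convention that $\varepsilon$ is right special only when both letters actually occur in $w$. You noticed this and stated it; just be aware that it differs from the surrounding text's stated convention. In the applications later in the section (to open Sturmian words, which necessarily contain both letters), the two conventions agree, so nothing downstream is affected.
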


\begin{lemma}\label{B} Let $w\in \{a,b\}^+$ be an open Sturmian word. Then $S^l(w)=K_w$ and $S^r(w)=H_w.$
\end{lemma}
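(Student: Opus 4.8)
The plan is to combine Lemma~\ref{A} with the special combinatorial structure of a Sturmian word. By Lemma~\ref{A} we have $S^l(w)=|w|-H_w$ and $S^r(w)=|w|-K_w$, and we observed above that $H_w=|A_w|$ and $K_w=|A'_w|$. So the statement $S^l(w)=K_w$ is equivalent to $|w|-H_w=K_w$, i.e. $H_w+K_w=|w|$, and likewise $S^r(w)=H_w$ reduces to the same identity $H_w+K_w=|w|$. Hence the two claimed equalities are actually a single identity, and the whole problem is to show that for an \emph{open} Sturmian word $w$, the length of the shortest unrepeated prefix plus the length of the shortest unrepeated suffix equals $|w|$.

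First I would unwind what $H_w$ and $K_w$ mean in terms of borders. The shortest unrepeated prefix of $w$ has length $H_w$ exactly when $w_1\cdots w_{H_w}$ is unrepeated (occurs once in $w$) but $w_1\cdots w_{H_w-1}$ is repeated; equivalently, $|A_w|=H_w$ counts the closed prefixes of $w$ (those of length $1,\dots,H_w-1$ together with $\varepsilon$... in fact the cleanest route is just to use $|A_w|=H_w$ and $|A'_w|=K_w$ directly as recorded in the text). The key structural input is that $w$ is Sturmian, so its set of factors is balanced and, crucially, for each length $\ell<|w|$ there is \emph{at most one} right special factor and at most one left special factor of that length among the factors of $w$; moreover in a Sturmian factor the left special factors and right special factors are reversals of one another, and a prefix of $w$ is repeated in $w$ precisely when it is a left special factor of $w$ (similarly a suffix is repeated iff it is right special). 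This is where openness enters: I expect that for a \emph{closed} Sturmian word one instead gets $H_w+K_w=|w|+1$ (the frontier being counted on both ends), and the "$-1$" is exactly killed by the hypothesis that $w$ is open, i.e. that its longest border — if any — occurs internally, so the border cannot simultaneously be the shortest repeated prefix and shortest repeated suffix.

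Concretely, the steps I would carry out are: (1) Restate both equalities as the single identity $H_w+K_w=|w|$ via Lemma~\ref{A}. (2) Show $|w|-H_w=S^l(w)$ equals the number of repeated prefixes of $w$, and identify repeated prefixes of $w$ with left special factors of $w$ (a proper prefix $p$ of $w$ is repeated in $w$ iff some $pa$ and $pb$ both occur, using that $w$ is Sturmian and recurrent-like enough inside $w$); symmetrically for suffixes and right special factors. (3) Use the Sturmian property that left special factors of $w$ are totally ordered by the prefix relation (they form a chain $\varepsilon \prec u_1 \prec u_2 \prec \cdots$, each a prefix of $w$), and similarly right special factors form a chain of suffixes, with $|u_i|$ and the corresponding right special lengths matching up. (4) Count: $S^l(w)$ is the number of left special proper factors $=$ the number of lengths $\ell\in\{0,1,\dots\}$ with $w_1\cdots w_\ell$ repeated; $S^r(w)$ is the number of lengths $\ell$ with $w_{|w|-\ell+1}\cdots w_{|w|}$ repeated. (5) Finally invoke openness: the set of repeated-prefix lengths is $\{0,1,\dots,H_w-1\}$ and the set of repeated-suffix lengths is $\{0,1,\dots,K_w-1\}$ (each is a down-set because a repeated prefix has all its prefixes repeated), so $S^l(w)=H_w$... wait, that would give $S^l(w)=H_w$ directly — and indeed comparing with Lemma~\ref{A}'s $S^l(w)=|w|-H_w$ forces $H_w=|w|/2$ only if I am careless; the honest resolution is that $S^l(w)=|w|-H_w$ counts left special factors including those that are not prefixes of $w$, and here the hypothesis that $w$ is an open Sturmian \emph{word} (a genuine factor of a Sturmian sequence) is what pins down the bijection between non-prefix left special factors and repeated suffixes.

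The main obstacle, then, is step (5): correctly matching the "extra" left special factors of $w$ (those that are not prefixes of $w$) with the repeated suffixes of $w$, and dually. I would handle this by the reversal symmetry of Sturmian words — $\overline{w}$ is again an open Sturmian word with $H_{\overline w}=K_w$, $S^l(\overline w)=S^r(w)$ — together with a careful census of the at-most-one-per-length special factors: for each length $\ell$ with $0\le \ell\le |w|-1$, exactly one of the following holds for the unique left special factor of that length (when it exists): it is a prefix of $w$ (contributing to "repeated prefix"), or it is a suffix of $\overline w$-type data, or it sits strictly internally; openness is precisely the statement that the longest border does not occupy both extreme positions, which removes the single double-count and yields $H_w+K_w=|w|$ exactly. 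If this matching proves delicate I would instead argue by induction on $|w|$ using the standard Sturmian cutting/desubstitution structure, but I expect the reversal-symmetry bookkeeping to be cleaner and to make transparent why "open" is exactly the right hypothesis.
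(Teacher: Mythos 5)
Your reduction via Lemma~\ref{A} is sound: both claimed equalities are equivalent to the single identity $H_w+K_w=|w|$ (which is exactly Corollary~\ref{C}, derived in the paper \emph{from} the present lemma). But the proposal never actually proves that identity. You yourself flag step (5) as ``the main obstacle'' and leave it as a plan (``I would handle this by the reversal symmetry\dots''), so the heart of the lemma is missing. Moreover, two of the auxiliary claims you lean on are false as stated: a repeated prefix of a finite Sturmian word need not be left special (in $w=aab$, a factor of the Fibonacci word, the prefix $a$ is repeated but the only left special factor is $\varepsilon$), and the left and right special factors of a \emph{finite} Sturmian word are not in general reversals of one another (same example: $a$ is right special in $aab$ but $\overline{a}=a$ is not left special). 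The proposed ``census'' never makes precise where openness enters, which is the whole content of the lemma.

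The missing idea is the following concrete use of openness, which is how the paper argues. Assume without loss of generality that the shortest unrepeated suffix $u$ begins in $a$; if $u=a$ then $w=b^{n-1}a$ and $S^l(w)=1=K_w$. Otherwise write $u=av$ with $v$ nonempty. Then $v$ is a repeated suffix of $w$; if its only occurrences were as a prefix and as a suffix, $w$ would be a complete first return to $v$ and hence closed, so openness forces an internal occurrence of $v$. That internal occurrence is preceded by a letter, which cannot be $a$ since $av=u$ occurs only as a suffix; hence $bv$ occurs, $v$ is left special, and so is every prefix of $v$, giving $S^l(w)\geq |v|+1=K_w$. The balance property then shows these are the \emph{only} left special factors: any left special $v'$ that is not a prefix of $v$ must have $v$ as a proper prefix, which would force an internal occurrence of $av$, a contradiction. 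This yields $S^l(w)=K_w$ directly, and $S^r(w)=H_w$ follows by applying the result to $\overline{w}$. Without an argument of this kind your outline does not close.
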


\begin{proof} It suffices to prove the first assertion as the second follows from the first by taking reversals.  Let $u$ denote the shortest unrepeated suffix of $w$ so that $K_w=|u|.$ Without loss of generality we may assume $u$ begins in $a.$ If $u=a$ then $w=b^{n-1}a$ and hence $\varepsilon$ is the unique left special factor of $w$ and so $K_w=1=S^l(w)$ as required.  Otherwise we may write $u=av$ for some $v\in\{a,b\}^+.$ As $w$ is open, $v$ must occur internally in $w.$ Furthermore as $av$ only occurs in $w$ as a suffix it follows that any internal occurrence of $v$ in $w$ must be preceded by the letter $b.$ Hence $v$ is a left special factor of $w$ and hence so is every prefix of $v.$ Thus $K_w\leq S^l(w).$ We now claim that the only left special factors of $w$ are $\varepsilon$ and all prefixes of $v,$ i.e., $K_w=S^l(w).$  In fact, let $v'$ be a left special factor of $w.$ Then both $av'$ and $bv'$ occur in $w.$ If $v'$ is not a prefix of $v,$ then as $w$ is Sturmian it follows that $v$ is a proper prefix of $v'$ which would imply that $av$ occurs internally in $w,$ a contradiction. \end{proof}

\noindent As an immediate consequence of the above two lemmas we obtain :

\begin{corollary}\label{C} Let $w\in \{a,b\}^+$ be an open Sturmian word. Then $|w|=H_w+K_w$ or equivalently $w=uv$ where $u$ (resp. $v)$ is the shortest unrepeated prefix (resp. suffix) of $w.$
\end{corollary}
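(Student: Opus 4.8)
The plan is to read off the statement directly from Lemma~\ref{B} together with Lemma~\ref{A}. Recall that $|A_w|=H_w$ counts closed prefixes and $|A'_w|=K_w$ counts closed suffixes, and that $S^l(w)$, $S^r(w)$ count distinct left and right special factors. By Lemma~\ref{A} applied to the open Sturmian word $w$ we have $S^l(w)=|w|-H_w$, while Lemma~\ref{B} gives $S^l(w)=K_w$. Equating these two expressions yields $|w|-H_w=K_w$, i.e. $|w|=H_w+K_w$, which is the first assertion. (One gets the same identity, redundantly, from the right-special count: $S^r(w)=|w|-K_w$ by Lemma~\ref{A} and $S^r(w)=H_w$ by Lemma~\ref{B}.)

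The second task is to translate the numerical identity $|w|=H_w+K_w$ into the factorization statement. Write $u$ for the shortest unrepeated (i.e. uni-occurrent) prefix of $w$, so $|u|=H_w$, and $v$ for the shortest unrepeated suffix of $w$, so $|v|=K_w$. Since $|u|+|v|=|w|$ and $u$ is a prefix of $w$ while $v$ is a suffix of $w$, the occurrence of $u$ at the front and the occurrence of $v$ at the end exactly tile $w$ without overlap, so $w=uv$. Conversely, if $w=uv$ with $|u|=H_w$, $|v|=K_w$, then $|w|=H_w+K_w$. This equivalence is essentially bookkeeping once the identity is in hand.

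I would present the proof in two short sentences: first cite Lemma~\ref{A} and Lemma~\ref{B} to get $|w|-H_w=K_w$; then observe that the shortest unrepeated prefix (length $H_w$) and the shortest unrepeated suffix (length $K_w$) of $w$ must fit together edge-to-edge precisely when their lengths sum to $|w|$, giving $w=uv$. No genuine obstacle arises here; the only point requiring a word of care is the "or equivalently" clause, i.e. making explicit that $|u|+|v|=|w|$ with $u$ a prefix and $v$ a suffix forces the two blocks to be adjacent and non-overlapping, hence their concatenation is all of $w$. All the real content has already been done in Lemma~\ref{B}, whose proof is where the Sturmian hypothesis and openness of $w$ were used.
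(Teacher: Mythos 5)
Your proposal is correct and matches the paper exactly: the corollary is stated there as an immediate consequence of Lemma~\ref{A} and Lemma~\ref{B}, obtained by equating $S^l(w)=|w|-H_w$ with $S^l(w)=K_w$. Your additional remarks spelling out why $|w|=H_w+K_w$ yields the factorization $w=uv$ are harmless bookkeeping that the paper leaves implicit.
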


\begin{corollary}\label{D} Let $w=w_1w_2\cdots w_n\in \{a,b\}^+$  be an open Sturmian word. Then $|A_w|=|B'_w|$ and $|A'_w|=|B_w|.$ In other words, the number of closed prefixes of $w$ is equal to the number of open suffixes of $w.$
\end{corollary}

\begin{proof} Using Corollary~\ref{C} we have that $|A_w|=H_w=|w|-K_w=n-|A'_w|=|B'_w|.$ \end{proof}

\noindent Combining Lemma~\ref{-A} with Corollary~\ref{D} yields the following result first announced in \cite{Fici} 

\begin{corollary} Every open Sturmian word $w\in \{a,b\}^+$ is a concatenation of two closed words.  \end{corollary}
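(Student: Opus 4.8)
The plan is to observe that all the real work has already been carried out, and that the statement is a two-step consequence of Corollary~\ref{D} and Lemma~\ref{-A}. So let $w=w_1w_2\cdots w_n\in\{a,b\}^+$ be an open Sturmian word; note that necessarily $n\geq 2$, since every one-letter word lies in $\A$ and is therefore closed, hence not open.

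First I would apply Corollary~\ref{D} to $w$: it yields exactly the equality $|A_w|=|B'_w|$, i.e. the number of closed prefixes of $w$ equals the number of open suffixes of $w$. This is the only place where the Sturmian hypothesis enters; internally it rests on Corollary~\ref{C} (the identity $|w|=H_w+K_w$) together with Lemmas~\ref{A} and~\ref{B}, which translate $H_w$ and $K_w$ into counts of left and right special factors.

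Second, with the equality $|A_w|=|B'_w|$ established and $w$ open, the hypotheses of Lemma~\ref{-A} are satisfied verbatim, and its conclusion is precisely that $w$ can be factored as a concatenation of two closed words. (For completeness I would recall how the splitting index is produced there: $A_w+1$ and $B'_w$ are subsets of $\{2,\ldots,n\}$ and of $\{1,\ldots,n\}$ respectively, of the same cardinality, while $1\in B'_w\setminus(A_w+1)$ because $w$ is open; hence $A_w+1\not\subseteq B'_w$, so $A_w+1$ meets $A'_w$, giving an index $i$ for which both $w_1\cdots w_i$ and $w_{i+1}\cdots w_n$ are closed.)

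Since both ingredients are already in place, there is essentially no obstacle left to overcome; the only points requiring attention are bookkeeping ones — excluding the degenerate one-letter case via the openness hypothesis, and quoting the hypotheses of Lemma~\ref{-A} in exactly the form in which they were proved. If one wished, one could additionally try to single out a distinguished such factorisation (for instance one governed by the shortest unrepeated prefix of $w$), but the route through Corollary~\ref{D} and Lemma~\ref{-A} already delivers the result in the cleanest way, and is the one I would present.
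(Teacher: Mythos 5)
Your proposal is correct and matches the paper's argument exactly: the paper obtains the corollary by combining Corollary~\ref{D} (which gives $|A_w|=|B'_w|$ for an open Sturmian word) with Lemma~\ref{-A}. Your additional recollection of how the splitting index is produced inside Lemma~\ref{-A} is accurate but not a new route.
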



\begin{thebibliography}{99}



\bibitem{ABCD} J.-P. Allouche, M. Baake, J. Cassaigne, D. Damanik, Palindrome complexity, Selected papers in honor of Jean Berstel, Theoret. Comput. Sci., 292 (2003), pp. 9--31.

\bibitem{AR} P. Arnoux, G. Rauzy, Repr\'{e}sentation g\'{e}om\'{e}trique de suites de complexit\'{e} $2n+1,$ Bull. Soc. Math. France 119 (2) (1991), pp. 199--215.

\bibitem{Ber} J. Berstel, Sturmian and episturmian words (A survey of some recent results), in: Proceedings of CAI 2007, Lecture Notes in Computer Science, vol. 4728, 2007, pp. 23--47.

Internat. J. Found. Comput. Sci., 15 (2004), pp. 293-306.

\bibitem{BDF} M. Bucci, A. De Luca, G. Fici,  Enumeration and structure of trapezoidal words, Theoret. Comput. Sci. 468 (2013), pp. 12--22.


\bibitem{CdL} A. Carpi, A. de Luca, Periodic-like words, periodicity and boxes, Acta. Inform. 37 (2001), pp. 597--618.

\bibitem{CFSZ} J. Cassaigne, G. Fici, M. Sciortino, L.Q. Zamboni, Cyclic complexity of words, J. Combin. Theory Ser. A, 145 (2017), pp. 36--56.

\bibitem{CPZ} E. Charlier, S. Puzynina, L.Q. Zamboni, On a group theoretic generalization of the Morse-Hedlund theorem, Proceedings of the AMS, 145 (2017), pp. 3381--3394.

\bibitem{CovHed} E.~M. Coven, G.~A. Hedlund, Sequences with minimal block growth,
 Math. Systems Theory 7 (1973), pp. 138--153.
 
 \bibitem{deL} A. de Luca, Sturmian words: structure, combinatorics and their arithmetics, Theoret. Comput. Sci. 183 (1997), pp. 45--82. 
 
 
 \bibitem{DJP} X. Droubay, J. Justin, G. Pirillo, Episturmian words and some constructions of de Luca and Rauzy, Theoret. Comput. Sci. 255 (2001), pp. 539--553.
 
  \bibitem{Fici0} G. Fici, Special factors and the combinatorics of suffix and factor automata, Theoret. Comput. Sci. 412 (2011), p. 3604--3615.

 \bibitem{Fici} G. Fici, Open and closed words,  Bull. Eur. Assoc. Theor. Comput. Sci. EATCS No. 123 (2017), p. 138--147.

 \bibitem{GJWZ} A. Glen, J. Justin, S. Widmer, L.Q. Zamboni,  Palindromic richness, European J. Combin. 30 (2009), no. 2, pp. 510--531.



\bibitem{JP} J. Justin, G. Pirillo, Episturmian words and episturmian morphisms, Theoret. Comput. Sci. 276 (2002), pp. 281--313.

\bibitem{KaZa} T. Kamae, L.Q. Zamboni,  Sequence entropy and the maximal pattern complexity of infinite words, Ergodic Theory Dynam. Systems  22 (2002),  pp. 1191--1199.


\bibitem{MoHe1}
M. Morse, G. Hedlund, Symbolic dynamics,  Amer. J. Math.  60 (1938), pp. 815--866.

\bibitem{Priv} J. Peltom\"aki, Introducing privileged words: privileged complexity of Sturmian words, Theoret. Comput. Sci. 500 (2013), pp. 57--67. 


\bibitem{RSZ} G. Richomme, K. Saari, L.Q. Zamboni, Abelian complexity of minimal subshifts,  J. Lond. Math. Soc. (2), 83 (2011),  pp. 79--95.


\bibitem{SS} L. Schaeffer, J. Shallit, Closed, palindromic, rich, privileged, trapezoidal, and balanced words in automatic sequences,  Electron. J. Comb.,
23(1) (2016), pp. 1--25.


\bibitem{TZ} R. Tijdeman, L.Q. Zamboni, Fine and Wilf words for any periods, Indag. Math. (N.S.) 14 (2003), pp. 135--147.

\end{thebibliography}
\end{document}